\documentclass[reqno]{amsart}
\usepackage[utf8x]{inputenc}  

\usepackage{setspace}
\onehalfspacing
\usepackage[left=2.8cm, right=2.8cm, bottom=2.5cm]{geometry}                 
\DeclareUnicodeCharacter{2212}{\ensuremath{-}}
\usepackage{graphicx} 
\usepackage{pgf,tikz}
\usetikzlibrary{arrows}
\usepackage{amssymb}
\usepackage{amsmath}
\usepackage{pdfsync}
\usepackage{mathrsfs}
\usepackage{hyperref} 
\usepackage{verbatim} 
\usepackage{epstopdf}
\DeclareGraphicsRule{.tif}{png}{.png}{`convert #1 `dirname #1`/`basename #1 .tif`.png}
\usepackage{bbm} 
\usepackage[colorinlistoftodos,prependcaption,textsize=tiny]{todonotes}
\usepackage{xargs}
\usepackage{lmodern}
\usepackage{mleftright}
\usepackage{listings}
\lstset{language=Mathematica}

\def\O{\operatorname{O}}

\def\meas{\operatorname{meas}}
\def\dist{\operatorname{dist}}
\def\erf{\operatorname{erf}}

\newcommandx{\emanuel}[2][1=]{\todo[linecolor=green,backgroundcolor=green!25,bordercolor=black,#1]{#2}}

\newcommandx{\diogo}[2][1=]{\todo[linecolor=orange,backgroundcolor=orange!25,bordercolor=orange,#1]{#2}}

\newcommandx{\mateus}[2][1=]{\todo[linecolor=blue,backgroundcolor=blue!25,bordercolor=blue,#1]{#2}}

\newcommandx{\danger}[2][1=]{\todo[linecolor=red,backgroundcolor=red!25,bordercolor=blue,#1]{#2}}

\renewcommand{\d}{\text{\rm d}}

 %
 %

\newtheorem{theorem}{Theorem}
\newtheorem{corollary}[theorem]{Corollary}

\newtheorem{remark}[theorem]{Remark}
\newtheorem{proposition}[theorem]{Proposition}

\newtheorem{example}[theorem]{Example}

\makeatletter
\DeclareFontFamily{U}{tipa}{}
\DeclareFontShape{U}{tipa}{m}{n}{<->tipa10}{}
\newcommand{\arc@char}{{\usefont{U}{tipa}{m}{n}\symbol{62}}}%

\newcommand{\arc}[1]{\mathpalette\arc@arc{#1}}

\newcommand{\arc@arc}[2]{%
  \sbox0{$\m@th#1#2$}%
  \vbox{
    \hbox{\resizebox{\wd0}{\height}{\arc@char}}
    \nointerlineskip
    \box0
  }%
}
\makeatother

\numberwithin{equation}{section}

\allowdisplaybreaks

\newcommand{\intav}[1]{\mathchoice {\mathop{\vrule width 6pt height 3 pt depth  -2.5pt
\kern -8pt \intop}\nolimits_{\kern -6pt#1}} {\mathop{\vrule width
5pt height 3  pt depth -2.6pt \kern -6pt \intop}\nolimits_{#1}}
{\mathop{\vrule width 5pt height 3 pt depth -2.6pt \kern -6pt
\intop}\nolimits_{#1}} {\mathop{\vrule width 5pt height 3 pt depth
-2.6pt \kern -6pt \intop}\nolimits_{#1}}}

\newcommand{\intavl}[1]{\mathchoice {\mathop{\vrule width 6pt height 3 pt depth  -2.5pt
\kern -8pt \intop}\limits_{\kern -6pt#1}} {\mathop{\vrule width 5pt
height 3  pt depth -2.6pt \kern -6pt \intop}\nolimits_{#1}}
{\mathop{\vrule width 5pt height 3 pt depth -2.6pt \kern -6pt
\intop}\nolimits_{#1}} {\mathop{\vrule width 5pt height 3 pt depth
-2.6pt \kern -6pt \intop}\nolimits_{#1}}}

\title[Distribution of the zeros of polynomials near the unit circle]{Distribution of the zeros of polynomials near the unit circle}

\author[Mithun Kumar Das]{Mithun Kumar Das}
\address{ Mathematics Section, The Abdus Salam International Centre for Theoretical Physics, Str. Costiera, 11, 34151 Trieste, Italy\\
and \\
School of Mathematical Sciences, National Institute of Science Education and Research, A CI of 
		Homi Bhabha National Institute, Jatni, Khurda, 
		752050, India.}
\email{mdas@ictp.it, das.mithun3@gmail.com}

\begin{document}

\subjclass[2010]{30C15,  11K38}
\keywords{
Zeros of polynomials, uniform  distribution, location of zeros}
\begin{abstract}
We estimate the number of zeros of a polynomial in $\mathbb{C}[z]$ within any small circular disc centered on the unit circle, which improves and comprehensively extends a result established by Borwein, Erd{\'e}lyi, and Littmann~\cite{BE1} in 2008. Furthermore, by combining this result with Euclidean geometry, we derive an upper bound on the number of zeros of such a polynomial within a region resembling a gear wheel. Additionally, we obtain a sharp upper bound on the annular discrepancy of such zeros near the unit circle.
Our approach builds upon a modified version of the method described in \cite{BE1}, combined with the refined version of the best-known upper bound for angular discrepancy of zeros of polynomials.
\end{abstract}
\maketitle 
\section{Introduction}
Let $P_n$ be a polynomial in $\mathbb{C}[z]$ of degree $n$. Our primary focus in this study is to understand the distribution of the zeros of $P_n$ on the complex plane, emphasizing their distribution within and around the unit disk.
One of the most extensively studied classes of polynomials is the family of unimodular polynomials denoted as $\mathcal{K}_n$. These polynomials are defined as follows:
\begin{align}\label{21/6/00:48}
\mathcal{K}_n:= \Big\{ P_n : P_n(z)= \sum_{k=0}^na_kz^k, \: a_k\in \mathbb{C}, \, |a_k|=1 \Big\}.
\end{align}
If we further restrict the coefficients to only $-1$ and $1$, such polynomials are called Littlewood polynomials, named in honor of Littlewood, who extensively studied various analytical properties (the mean value, the number of zeros, etc.) related to polynomials and power series with constrained coefficients lying on the complex unit circle $|z|=1$ (see \cite{L, L1}).
 The study of complex zeros of Littlewood polynomials and polynomials with coefficients $\{-1,\, 0,\, 1\}$ is an old subject, dating back to the pioneering work of Bloch and P{\'o}lya \cite{BP}. Their research established that, on average, such polynomials of degree $n$ have at most $\O(\sqrt{n})$ real roots.
 Modern investigations into the distribution of zeros often originate from the seminal result of  Erd\H{o}s and Tur{\'a}n\cite{ET}, which addresses the angular equidistribution of zeros and is applicable to polynomials within the $\mathbb{C}[z]$. In a later section, we will delve into a more detailed discussion of this matter.
 In their work \cite{BE}, Borwein and  Erd{\'e}lyi explore a broader perspective by considering a generalization of the class $\mathcal{K}_n$ as follows:
\begin{align}\label{21/6/15:48}
\mathcal{G}_n:= \Big\{ P_n : P_n(z)= \sum_{k=0}^na_kz^k, \: a_k\in \mathbb{C}, \, |a_0|=|a_n|=1,\, |a_k|\leq 1 \Big\}.
\end{align}
They demonstrate that any polygon with vertices on the unit circle $|z|=1$ contains at most $\O(\sqrt{n})$ zeros of such polynomials. Such a result has been established in \cite{ PS, PY} for random polynomials.

 In 2008,  Borwein, Erd{\'e}lyi, Littmann~\cite{BE1} studied the distribution of zeros for any polynomial in $\mathcal{G}_n$ that lies within small circles centered on the unit circle. Specifically, they established the following result: 
\begin{theorem}[Theorem~1, \cite{BE1}]\label{BE1}
Let $P_n \in \mathcal{G}_n$ be a polynomial of degree $n$.
 Then any disk of radius  $33 \pi \log{n}/\sqrt{n}, (\leq 1)$ with the center on the unit circle $|z|=1$ contains at least $8\sqrt{n}\log{n}$ zeros of $P_n$.
\end{theorem}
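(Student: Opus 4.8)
\emph{Proof proposal.} The plan is to play two classical facts against one another: the hypotheses $|a_0|=|a_n|=1$ confine the zeros of $P_n$, in modulus, to a thin annulus about $|z|=1$, whereas the Erd\H{o}s--Tur\'an inequality forces those zeros to be equidistributed in argument; an elementary geometric step then converts ``many zeros of nearly unit modulus with the correct argument'' into ``many zeros inside the disk''. Concretely, after a rotation we may assume the disk is $D=\{z\in\C:|z-1|\le r\}$ with $r=33\pi\log n/\sqrt n\le 1$, we write the zeros of $P_n$ as $z_j=\rho_je^{i\phi_j}$ ($1\le j\le n$, all of them since $|a_n|=1$), and we fix $\alpha=\delta=r/2$. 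From
\[
|z_j-1|\le|\rho_j-1|\,\bigl|e^{i\phi_j}\bigr|+\bigl|e^{i\phi_j}-1\bigr|=|\rho_j-1|+2\bigl|\sin(\phi_j/2)\bigr|\le|\rho_j-1|+|\phi_j|
\]
every zero with $|\phi_j|\le\alpha$ and $|\rho_j-1|\le\delta$ lies in $D$. Hence, writing $A$ for the number of zeros with argument in the arc $[-\alpha,\alpha]$ and $B$ for the number with $|\rho_j-1|>\delta$, the disk $D$ contains at least $A-B$ zeros of $P_n$, and it remains to bound $A$ below and $B$ above.

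For $A$ I would invoke the refined Erd\H{o}s--Tur\'an angular discrepancy estimate (the sharpened constant alluded to in the abstract): since $P_n\in\G_n$ satisfies $\sum_k|a_k|\le n+1$ and $|a_0a_n|=1$, the number of zeros with argument in an arc of length $\ell$ differs from $\ell n/(2\pi)$ by at most $C\sqrt{n\log(n+1)}$. Taking $\ell=2\alpha=r$ gives
\[
A\ \ge\ \frac{rn}{2\pi}-C\sqrt{n\log(n+1)}\ =\ \frac{33}{2}\,\sqrt n\,\log n-\O\bigl(\sqrt{n\log n}\bigr).
\]

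For $B$ I would use Jensen's formula, applied to $P_n$ and to the conjugate-reciprocal polynomial $z^n\overline{P_n(1/\bar z)}$ (whose zeros are the $1/\bar z_j$). Using $|a_0|=|a_n|=1$, it yields
\[
\frac1{2\pi}\int_0^{2\pi}\log\bigl|P_n(e^{i\theta})\bigr|\,d\theta=\sum_{\rho_j<1}\log\frac1{\rho_j}=\sum_{\rho_j>1}\log\rho_j ,
\]
while concavity of $\log$ bounds the left-hand side by $\tfrac12\log\bigl(\tfrac1{2\pi}\int_0^{2\pi}|P_n(e^{i\theta})|^2\,d\theta\bigr)=\tfrac12\log\sum_k|a_k|^2\le\tfrac12\log(n+1)$. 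Since each zero with $\rho_j<1-\delta$ contributes at least $\log\tfrac1{1-\delta}\ge\delta$ to the first sum and each zero with $\rho_j>1+\delta$ contributes at least $\log(1+\delta)\ge\delta/2$ to the second, we obtain $B\le\tfrac32\log(n+1)/\delta=\O(\sqrt n)$ for the choice $\delta=r/2$.

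Combining, $D$ contains at least $A-B\ge\tfrac{33}{2}\sqrt n\log n-\O(\sqrt{n\log n})-\O(\sqrt n)$ zeros of $P_n$, which exceeds $8\sqrt n\log n$ throughout the admissible range (the hypothesis $r\le 1$, i.e.\ $33\pi\log n\le\sqrt n$, already forces $n$ to be large, and there the error $\sqrt{n\log n}$ is tiny against $\sqrt n\log n$). The one point that needs care is the bookkeeping of the constants: one must enter with a sufficiently sharp form of the Erd\H{o}s--Tur\'an inequality so that the discrepancy error $C\sqrt{n\log(n+1)}$ together with the radial loss $B$ stays comfortably below $\bigl(\tfrac{33}{2}-8\bigr)\sqrt n\log n$ on the whole range $33\pi\log n\le\sqrt n$. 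Conceptually this is never in doubt, since both error terms are $\o(\sqrt n\log n)$ while the main term $rn/(2\pi)$ has the matching order $\sqrt n\log n$; but it is exactly here that the hypotheses $|a_0|=|a_n|=1$ (through Jensen) and the sharp angular discrepancy estimate are genuinely used. I would also note that optimising the inequality $\alpha+\delta\le r$ — taking $\delta$ a small multiple of $r$ and $\alpha$ close to $r$ — lets one replace the constant $8$ by anything below roughly $33$.
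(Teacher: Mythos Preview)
Your strategy is correct and is essentially the one the paper uses (note that the paper does not reprove this cited theorem of Borwein--Erd\'elyi--Littmann directly; it proves the sharper Theorems~\ref{thm2} and~\ref{thm3} by the same route). In both cases the argument is: (i) the Erd\H{o}s--Tur\'an angular discrepancy bound forces at least $\sim \tfrac{rn}{2\pi}$ zeros into a narrow sector of half-angle $\alpha$, (ii) a Jensen/Mahler-measure estimate (the paper packages this as Proposition~\ref{lem2}, citing~\cite{PS}) shows that all but $\O(\log n/\delta)$ zeros lie in the annulus $1-\delta\le|z|\le(1-\delta)^{-1}$, and (iii) the intersection of sector and annulus is contained in the disk.

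The differences are cosmetic. Your containment step uses the triangle inequality $|z_j-1|\le|\rho_j-1|+|\phi_j|$ and the naive split $\alpha=\delta=r/2$; the paper instead computes the exact distance from $1$ to the far corner $(1-\alpha)^{-1}e^{i\delta_n}$ of the annular sector and then optimises the two free parameters, which is why it lands on constants like $7$ or $9$ in place of $33\pi$. Your direct appeal to Jensen for the radial bound is exactly what underlies the paper's Proposition~\ref{lem2}. Your closing remark that pushing $\delta$ small and $\alpha$ close to $r$ improves the constant towards $33$ is precisely the optimisation the paper carries out explicitly. The only soft spot you flag yourself: the bookkeeping showing $A-B\ge 8\sqrt n\log n$ on the \emph{entire} range $33\pi\log n\le\sqrt n$ needs an explicit constant in the discrepancy estimate, not just $\O(\sqrt{n\log n})$; the paper handles this by working with the sharp constant from~\eqref{SW} or Proposition~\ref{thm0}.
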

For random polynomials, Pritsker and Sola~\cite{PS}, as well as Pritsker and Yeager~\cite{PY}, have derived the expectation of the above result asymptotically. In their work \cite{PS}, they considered the polynomial $P_n(z) = \sum_{k=0}^n C_kz^k$ with coefficients that are iid complex random variables with absolutely continuous distribution and $\mathbb{E}(|C_0|^t) < \infty$ for any $t>0$. Then, they showed that the expected number of zeros inside any disc of radius $r<2$ with center on the unit circle $|z|=1$ is given by
\begin{align*}
\frac{2}{\pi}\arcsin(r/2) n+\O(\sqrt{n\log {n}}) \quad \mbox{ as } n
\rightarrow \infty.
\end{align*}

\section{New results} \label{sec2}
After the pioneering work of Borwein, Erd{\'e}lyi and Littmann~\cite{BE1} on the polynomial class $\mathcal{G}_n$, no further extensions of general polynomials in $\mathbb{C}[z]$ have been explored, except for the results mentioned above about to random polynomials. In this study, we establish a lower bound for the number of zeros of polynomials in $\mathbb{C}[z]$ situated within a circular disk, with its center located on the unit circle $\mathbb{T}$. Furthermore, we introduce a parametrization to the radius of this circular disk. Defining some notations regarding the $p$-norm of polynomials on the unit circle $\mathbb{T}$ is necessary to present our results. Let
\begin{align}\label{P_n}
P_n(z) = c_n\prod_{j=1}^n \big( z-\alpha_j\big) = \sum_{j=0}^{n}c_jz^j
\end{align}
be a polynomial of degree $n$, with $c_0 \neq 0$ and roots $\alpha_j=\rho_j \, e(\theta_j)$, $j=1, 2, \ldots, n$,  where $e(\theta):=e^{2\pi i \theta }$ and throughout the paper we use the same notation. For $0< p < \infty$, a $p$-norm of $P_n$ is defined on unit circle by
\[
\|P_n\|_{p}= \left(\frac{1}{2\pi}\int_\mathbb{T} |P_n\big(e^{i \theta}\big)\big|^p\,\d\theta\right)^{\frac{1}{p}} = \left(\int_{\mathbb{R}/\mathbb{Z}} |P_n\big(e( \theta)\big)\big|^p\,\d\theta\right)^{\frac{1}{p}}.
\]
Thus, for $p=2$, we get $\|P_n\|_{2}^2 = |c_{n}|^2 + \ldots + |c_0|^2 $. The $\sup$-norm of $P_n$on $\mathbb{T}$ is defined by $\|P_n\|_{\infty}= \sup_{\mathbb{T}} |P_n\big(e^{i \theta}\big)\big|$.  Furthermore, let  $E$ be the subset of $[0, 1]$ which is given by $E=\{\theta \in [0,1] : |P_n(e(\theta))| < 1\}$ and for $0<p\leq \infty$, defined the logarithmic $p$-norm by
\begin{align}\label{norm B_p}
B_p(P_n)= \begin{cases}
\log{\left(\frac{\|P_n\|_{\infty}}{\sqrt{|c_0c_n|}}\right)} & \mbox{ if } p=\infty,\\
(1-|E|) \log{\left(\frac{\|P_n\|_{p}}{\sqrt{|c_0c_n|}}\right)} + \frac{1}{ep} & \mbox{ if } 0<p<\infty.
\end{cases}
\end{align}
 Now  we state our new results as follows:
\begin{theorem}\label{thm2} Let $P_n(z) = \sum_{k=0}^n c_kz^k $ be a polynomial of degree $n$ such that $P_n(0)\neq 0$ and  $\theta$ be a fixed real number in $[\frac{1}{2},\, 1 ]$. Then any disk of radius $\gamma_n = 7{\frac{(2B_\infty(P_n))^\theta}{\sqrt{n}}} \leq 1$ with center on the unit circle $|z|=1$, contains at least $\sqrt{n}(2B_\infty(P_n))^{\theta}$ zeros of $P_n$.

Furthermore, if $p\in (0,\,\infty)$ and $P_n$ satisfy the conditions $|c_nc_0|\geq 1$ and $\|P_n\|_p\geq 1$, then any disk of radius $\gamma_n = 9{\frac{(2B_p(P_n))^\theta}{\sqrt{n}}} \leq 1$ with center on the unit circle $|z|=1$, contains at least $\sqrt{n}(2B_p(P_n))^{\theta}$ zeros of $P_n$.
\end{theorem}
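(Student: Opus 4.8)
The plan is to follow the Borwein–Erdélyi–Littmann strategy from \cite{BE1}, but to track the role of the logarithmic norm $B_\infty(P_n)$ (resp. $B_p(P_n)$) as an explicit parameter rather than specializing immediately to the class $\mathcal{G}_n$. Suppose, for contradiction, that some disk $D$ of radius $\gamma_n$ centered at a point $e(\phi)\in\mathbb{T}$ contains fewer than $N:=\sqrt{n}(2B_\infty(P_n))^\theta$ zeros of $P_n$. Write $P_n(z)=Q(z)R(z)$, where $Q$ collects the zeros inside $D$ and $R$ collects those outside. The heart of the argument is a two-sided comparison on the arc of $\mathbb{T}$ antipodal to (or at least far from) $\phi$: on the one hand, since $R$ has no zeros in $D$, the factor $R$ cannot be too small in $\sup$-norm relative to its value far from $D$; on the other hand, $Q$ — having few, clustered zeros — is well controlled there too. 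Combining these forces $\|P_n\|_\infty/\sqrt{|c_0c_n|}$, hence $B_\infty(P_n)$, to be larger than what the definition of $N$ and $\gamma_n$ allow, giving the contradiction.

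More concretely, the key steps in order are: \textbf{(i)} Normalize so that the disputed disk is centered at $z=1$; by a rotation this costs nothing since all norms on $\mathbb{T}$ and the quantity $|c_0c_n|$ are rotation-invariant. \textbf{(ii)} Factor $P_n=Q\cdot R$ with $\deg Q=m<N$ and all zeros of $Q$ within distance $\gamma_n$ of $1$. \textbf{(iii)} Lower-bound $|R(z)|$ for $z$ on a fixed arc around $z=-1$ in terms of $|R|$ near $z=1$: because $R$ is zero-free on $D$, a Koebe/Cauchy-type distortion estimate (or the Borwein–Erdélyi lemma on polynomials without zeros in a disk) shows $|R|$ does not drop by more than a factor $e^{O(\gamma_n n)}$; with $\gamma_n n = 7\sqrt{n}(2B_\infty)^\theta$ and $\theta\le 1$ this is $e^{O(N)}$, and since $N \le \tfrac17\gamma_n\sqrt{n}\cdot(\cdots)$—the point is that the loss is comparable to $N$, \emph{not} a larger power. \textbf{(iv)} Upper-bound $|Q|$ near $z=-1$ by its behavior: each of the $m<N$ factors $(z-\alpha_j)$ with $|\alpha_j-1|\le\gamma_n$ satisfies $|z-\alpha_j|\le |z-1|+\gamma_n \le 2+\gamma_n$ near $-1$, while near $z=1$ one can find a point where $|Q|$ is not too small; the ratio is at most $(1+\gamma_n)^m \le e^{\gamma_n N}$. \textbf{(v)} Multiply (iii) and (iv): on a suitable arc near $-1$ we get $|P_n|\ge \sqrt{|c_0c_n|}\,e^{-cN}$ for an absolute constant $c$ determined by the numerology $\gamma_n\sqrt n = 7(2B_\infty)^\theta$ and $N=\sqrt n(2B_\infty)^\theta$, so $c = O(1)$ with $c<1$ after choosing the constant $7$ appropriately. \textbf{(vi)} Compare with the definition $B_\infty(P_n)=\log(\|P_n\|_\infty/\sqrt{|c_0c_n|})$: this yields $B_\infty(P_n)\ge$ something like $\tfrac1{O(1)}(2B_\infty(P_n))^\theta$, which for $\theta\in[\tfrac12,1]$ and the explicit constants is impossible once $B_\infty(P_n)$ is bounded below by a harmless absolute constant (and one handles the small-$B_\infty$ regime separately, where the claimed disk radius is still $\le 1$ and the statement is near-trivial or vacuous). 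For the $L^p$ version in the second paragraph one replaces $\|P_n\|_\infty$ by $\|P_n\|_p$ and pays the extra multiplicative constant ($9$ instead of $7$) to absorb the Nikolskii-type or Hardy-space loss relating $\sup$-norms on subarcs to $L^p$-norms on $\mathbb{T}$; here the hypotheses $|c_nc_0|\ge 1$ and $\|P_n\|_p\ge 1$ ensure $B_p(P_n)\ge\frac1{ep}>0$ and keep all logarithms positive.

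The main obstacle I expect is \textbf{step (iii)}: making the distortion estimate for the zero-free factor $R$ quantitatively sharp enough that the exponential loss is $e^{(1+o(1))\gamma_n n}$ with the \emph{right} constant in the exponent, rather than merely $e^{O(\gamma_n n)}$. In \cite{BE1} this is where the $33\pi$ and the $8$ come from, and it is the reason our $\theta$-parametrized radius carries the constants $7$ and $9$; one must carefully choose the reference arc near $z=-1$ (or more precisely, choose the two comparison points to be roughly antipodal on $\mathbb{T}$, at angular distance bounded below, so that the hyperbolic distance in the complement of $D$ from one to the other is $\approx \log(1/\gamma_n)+O(1)$, controlling $|R|$) and feed this into the subharmonicity of $\log|R|$. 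A secondary technical point is ensuring that near $z=1$ we can \emph{simultaneously} find one point where $|R|$ is large and $|Q|$ is not too small; this is handled by a pigeonhole over $\sqrt n$ equally spaced points on a short arc near $1$, exactly as in \cite{BE1}, using that $P_n$ has only $n$ zeros so at most a bounded fraction of those test points can be close to a zero. Once these are in place, the rest is bookkeeping with the definitions of $\gamma_n$, $N$, and $B_p$.
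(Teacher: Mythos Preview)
Your proposal takes a genuinely different route from the paper, and as written it does not close. The paper does \emph{not} run a factorization/distortion contradiction argument. Instead it argues directly by combining two counting estimates. First, the sharp angular discrepancy bound of Shu--Wang, $\big|n\tau_n(S(\alpha,\beta))-\tfrac{\beta-\alpha}{2\pi}n\big|\le\sqrt{2nB_\infty(P_n)}$, forces any sector $S(\varphi-\delta_n,\varphi+\delta_n)$ with $\delta_n=\tfrac{\pi a}{\sqrt n}(2B_\infty)^\theta$ to contain at least $(a-1)\sqrt n(2B_\infty)^\theta$ zeros. Second, the annular bound of Proposition~\ref{lem2}, $n\tau_n(\mathbb C\setminus\mathcal A_{1-\alpha})\le\tfrac{2}{\alpha}B_\infty$, with the choice $\alpha=\tfrac{b}{\sqrt n}(2B_\infty)^{1-\theta}$, shows that at most $\tfrac1b\sqrt n(2B_\infty)^\theta$ of those zeros lie outside the annulus $\{1-\alpha<|z|<(1-\alpha)^{-1}\}$. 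Hence the curved box $\mathcal A_{1-\alpha}\cap S(\varphi-\delta_n,\varphi+\delta_n)$ already contains $(a-1-\tfrac1b)\sqrt n(2B_\infty)^\theta$ zeros; one then computes by elementary geometry the radius of the smallest disk centred at $e^{i\varphi}$ containing that box, and optimizes $a,b$ (the paper takes $a=2.195$, $b=0.195$) to get $r<7(2B_\infty)^\theta/\sqrt n$. The $L^p$ version runs identically with Proposition~\ref{thm0} replacing \eqref{SW}, and $a=2.8$. Crucially, the parameter $\theta$ enters only through the \emph{independent} choice of sector width $\delta_n\asymp(2B_\infty)^\theta/\sqrt n$ and annulus width $\alpha\asymp(2B_\infty)^{1-\theta}/\sqrt n$, balancing the two losses.

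Your steps (v)--(vi) contain a genuine gap. In step~(v) you produce a pointwise lower bound $|P_n(z_0)|\ge\sqrt{|c_0c_n|}\,e^{-cN}$ on some arc; taking the supremum this gives only $B_\infty(P_n)\ge -cN$, which is vacuous. Even if you intended the reverse inequality $B_\infty\le cN=c\sqrt n(2B_\infty)^\theta$ (which would require controlling $|P_n|$ from above at its unknown maximizer on $\mathbb T$, not on an arc of your choosing), for $\theta<1$ that rearranges to $B_\infty^{1-\theta}\le c\,2^\theta\sqrt n$, which is perfectly compatible with the standing hypothesis $\gamma_n\le1$ and yields no contradiction. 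The exponent $\theta$ is not something a distortion/Koebe argument naturally produces; it is an artefact of the two-parameter optimization in the paper's sector/annulus decomposition, and there is no evident way to recover it from your factorization $P_n=QR$.
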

The subsequent result presents an analog of Theorem~\ref{thm2} for the class of polynomials $P_n \in \mathcal{G}_n$.
\begin{theorem}\label{thm3}
Let  $P_n \in \mathcal{G}_n$ a polynomial and $\theta \in [\frac{1}{2},\, 1 ]$ be any fixed real number. Then any open disk of radius $\gamma_n = 9{\frac{(\log{n})^\theta}{\sqrt{n}}} \leq 1$ with centre on $|z|=1$ contains at least $\sqrt{n}(\log{n})^{\theta}$ zeros of $P_n$.
\end{theorem}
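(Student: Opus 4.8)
The plan is to obtain Theorem~\ref{thm3} from the second (finite‑exponent) part of Theorem~\ref{thm2} applied with $p=2$, the only real content being a uniform control of the logarithmic $2$‑norm $B_2(P_n)$ across the class $\mathcal{G}_n$. First I would check that every $P_n(z)=\sum_{k=0}^n c_kz^k\in\mathcal{G}_n$ satisfies the hypotheses of that part. Since $|c_0|=|c_n|=1$ we have $P_n(0)=c_0\neq0$, $|c_nc_0|=1\ge1$, and $\sqrt{|c_0c_n|}=1$; and since $\|P_n\|_2^2=\sum_{k=0}^n|c_k|^2\ge|c_0|^2=1$ we have $\|P_n\|_2\ge1$. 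Hence the requirements ``$|c_nc_0|\ge1$'' and ``$\|P_n\|_2\ge1$'' in Theorem~\ref{thm2} hold automatically for all of $\mathcal{G}_n$, and $p=2$ is the natural choice because the $2$‑norm is the one that $\mathcal{G}_n$ controls sharply.

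Next I would bound $B_2(P_n)$ from above. As $|c_k|\le1$ for every $k$ we get $\|P_n\|_2\le\sqrt{n+1}$, so, using $0\le1-|E|\le1$ and $\log\|P_n\|_2\ge0$,
\[
B_2(P_n)=(1-|E|)\log\!\left(\frac{\|P_n\|_2}{\sqrt{|c_0c_n|}}\right)+\frac1{2e}\ \le\ \tfrac12\log(n+1)+\tfrac1{2e},
\]
hence $\bigl(2B_2(P_n)\bigr)^{\theta}\le\bigl(\log(n+1)+e^{-1}\bigr)^{\theta}$. Because the condition $\gamma_n=9(\log n)^\theta/\sqrt n\le1$ already forces $n$ to be large, the quantity $\log(n+1)+e^{-1}$ exceeds $\log n$ only by $\log(1+1/n)+e^{-1}<1$, i.e.\ by a factor $1+O(1/\log n)$. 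Feeding this into Theorem~\ref{thm2} with $p=2$ yields a disk of radius $9\bigl(2B_2(P_n)\bigr)^\theta/\sqrt n$, centred on $\mathbb T$, containing at least $\sqrt n\,\bigl(2B_2(P_n)\bigr)^\theta$ zeros of $P_n$; to upgrade this to the (possibly larger) disk of radius exactly $9(\log n)^\theta/\sqrt n$ and the count $\sqrt n(\log n)^\theta$ one invokes the monotone form of the argument behind Theorem~\ref{thm2}, namely that a disk of radius $r$ with $\gamma_n\le r\le1$ and centre on $\mathbb T$ contains at least $nr/9$ zeros. (Theorem~\ref{thm3} is stated for an open disk, so one applies this to a concentric closed disk of radius $(1-o(1))\,9(\log n)^\theta/\sqrt n$, which changes nothing.)

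The main obstacle — and the only delicate point — is this last bit of constant bookkeeping: one must confirm that the proof of Theorem~\ref{thm2} genuinely delivers the monotone ``$\ge nr/9$'' statement (so the radius in Theorem~\ref{thm3} can be taken to be $9(\log n)^\theta/\sqrt n$ rather than the a~priori smaller $9(2B_2(P_n))^\theta/\sqrt n$), and that the additive discrepancy between $\log(n+1)+e^{-1}$ and $\log n$ is genuinely absorbed by the numerical constant. The cleanest way to sidestep both issues is to rerun the proof of Theorem~\ref{thm2}\,(ii) verbatim for $P_n\in\mathcal{G}_n$, substituting the class‑specific inputs $|c_0c_n|=1$ and $1\le\|P_n\|_2\le\sqrt{n+1}$ from the outset, so that $\tfrac12\log n$ plays the role of $B_2(P_n)$ throughout and the constant $9$ in Theorem~\ref{thm3} is simply what the argument produces after these simplifications, with no separate absorption step. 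In either presentation the analytic heart is exactly that of Theorem~\ref{thm2} — the Jensen/Mahler‑type estimate together with the refined angular‑discrepancy bound — and nothing new is needed beyond it.
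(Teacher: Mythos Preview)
Your proposal is correct and matches the paper's approach. You rightly observe that a black-box application of Theorem~\ref{thm2} with $p=2$ does not quite deliver Theorem~\ref{thm3} (since $2B_2(P_n)$ need not coincide with $\log n$), and your recommended fix---rerunning the proof of Theorem~\ref{thm2}(ii) with the uniform bound $B_2(P_n)\le\tfrac12\log(n+1)+\tfrac1{2e}$ substituted from the outset---is exactly what the paper does: it replaces $B_2(P_n)$ by this upper bound in the two places it enters (Proposition~\ref{thm0} and \eqref{10/12}), takes $\delta_n=\tfrac{2.8\pi}{\sqrt n}(\log(n+1)+e^{-1})^\theta$ and $\alpha=\tfrac{0.2}{\sqrt n}(\log(n+1)+e^{-1})^{1-\theta}$, and then uses $\log(n+1)>\log n$ to land on the stated constants.
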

\begin{remark}
In Theorem~\ref{thm3}, if we set $\theta = 1$, we can draw a meaningful comparison with Theorem~\ref{BE1}. Notably, when the condition $\gamma_n \leq 1$ holds, we are permitted to work with polynomials of degree as small as $6170$, whereas Theorem \ref{BE1} imposes a strict lower limit of $2307256$ for the polynomial degree.
Furthermore, the disk's radius described in Theorem~\ref{thm3} is approximately $11.5$ times smaller than the radius mentioned in Theorem~\ref{BE1}. However, in our case, the number of zeros within the disk is only $1/8$-th of the number of zeros within the disk discussed in \cite{BE1}.

Further, if we decide to maintain the same radius, denoted by  $\gamma_n = 33\pi{\frac{\log{n}}{\sqrt{n}}}$, then the disk of radius $\gamma_n$ contains at least $31\sqrt{n}\log{n}$ zeros of $P_n$.
\end{remark}
Now, we look at an application of Theorem \ref{thm2} in order to estimate the number of zeros inside a certain gear wheel type region. In mathematics, a gear wheel (also known as a gear or cogwheel) is typically defined in the context of geometry and trigonometry. Gears are used to transmit motion and force from one rotating object to another, and they have a wide range of applications in mechanical engineering, physics, and other fields. A geometric construction for 
the gear wheel region we consider here is the following: 
Create a sizeable circular disk, denoted as $C$, and symmetrically position $T$ smaller circles along the boundary of $C$, maintaining a uniform width gap $d$ between any two adjacent smaller circles. After resolving any intersections that may occur during this process, the result is a gear with $T$ teeth, each tooth possessing a width of $d$. Please refer to Figure \ref{Gear wheel}, generated using Mathematica software for visual reference.
We prove the following result on zero distribution result of $P_n$ on the gear wheel.
\begin{theorem}\label{thm gear}
Let $P_n(z) = \sum_{k=0}^n c_kz^k $ be a polynomial of degree $n$ such that $P_n(0)\neq 0$. Set $\gamma_n = 7{\frac{(2B_\infty(P_n))^\theta}{\sqrt{n}}} \leq \frac{1}{2}$ with $\theta \in [\frac{1}{2},\, 1 ]$ and 
 $0\leq \delta<1$. Then the gear wheel region on the unit disc with approximately $\frac{\pi}{1+\delta} (\arcsin(\frac{\gamma_n}{2}\sqrt{4-\gamma_n}))^{-1}$ teeth and each of width $2\delta\arcsin \left(\frac{\gamma_n}{2}\sqrt{4-\gamma_n^2}\right)$, contain at most $n\Big( 1- \frac{0.97\pi}{7(1+\delta)}\Big) $ zeros of $P_n$. 
 
Further, if $P_n$ satisfy the conditions $|c_nc_0|\geq 1$ and $\|P_n\|_p\geq 1$ and if we assume $\gamma_n = 9{\frac{(2B_p(P_n))^\theta}{\sqrt{n}}} \leq \frac{1}{2}$ then the same gear wheel contains at most $n\Big( 1- \frac{0.97\pi}{9(1+\delta)}\Big) $ zeros of  $P_n$.
\end{theorem}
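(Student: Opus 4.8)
The plan is to manufacture the gear wheel by deleting from $\{|z|\le1\}$ a family of $T$ equally spaced small disks of radius $\gamma_n$ centred on $\T$, and then to pit Theorem~\ref{thm2} against the trivial fact that $P_n$ has exactly $n$ zeros in $\C$: each deleted disk is forced by Theorem~\ref{thm2} to absorb a definite number of zeros, so the leftover region — the gear — can keep only $n$ minus that total. Put $N:=\sqrt n\,(2B_\infty(P_n))^\theta$, so that $\gamma_n=7N/n\ (\le\tfrac12)$, hence $N=n\gamma_n/7$; since $\gamma_n\le1$, Theorem~\ref{thm2} applies and any disk of radius $\gamma_n$ with centre on $|z|=1$ contains at least $N$ zeros of $P_n$, counted with multiplicity.

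For the geometry, set $x:=\arcsin(\gamma_n/2)$. Since $\tfrac{\gamma_n}{2}\sqrt{4-\gamma_n^2}=2\cdot\tfrac{\gamma_n}{2}\sqrt{1-(\gamma_n/2)^2}=\sin 2x$ and $\gamma_n\le\tfrac12$ forces $2x\le2\arcsin(1/4)<\tfrac{\pi}{2}$, we have $\arcsin\!\bigl(\tfrac{\gamma_n}{2}\sqrt{4-\gamma_n^2}\bigr)=2x$, so the statement's parameters read: number of teeth $T=\bigl\lfloor\tfrac{\pi}{2(1+\delta)x}\bigr\rfloor$ and tooth width $4\delta x$. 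Fix the angular spacing $\omega:=4(1+\delta)x$ and put equally spaced centres $w_1,\dots,w_T$ on $\T$ with consecutive angular gap $\omega$. A disk $D(w_j,\gamma_n)$ meets $|z|=1$ in precisely the arc of angular width $4x$ about $\arg w_j$ (from $|e^{i\phi}-1|=2\sin(\phi/2)=\gamma_n$ at $\phi=2x$), so the footprints of consecutive disks are separated by an uncovered arc of width $\omega-4x=4\delta x$; thus $\{|z|\le1\}\setminus\bigcup_{j=1}^{T}D(w_j,\gamma_n)$ is a gear of the stated shape, with $T$ outward teeth of width $4\delta x$ over the uncovered arcs. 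The one point needing care here is to confirm that the excised disks $D(w_j,\gamma_n)$ are pairwise disjoint: it suffices to bound the distance $2\sin(\omega/2)=2\sin\!\bigl(2(1+\delta)x\bigr)$ between adjacent centres below by $2\gamma_n=4\sin x$, i.e.\ to check $\sin\!\bigl(2(1+\delta)x\bigr)\ge2\sin x$ on $0<x\le\arcsin(1/4)$, and this is where the hypothesis $\gamma_n\le\tfrac12$ does its work.

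Granting disjointness, the count is essentially automatic. The union $\bigcup_{j=1}^T D(w_j,\gamma_n)$ carries at least $TN$ zeros (disjointness plus Theorem~\ref{thm2}), the gear is contained in its complement $\C\setminus\bigcup_{j=1}^T D(w_j,\gamma_n)$, and $P_n$ has $n$ zeros in all, so the gear contains at most $n-TN$ of them. Using $N=n\gamma_n/7=2n\sin x/7$,
\[
TN\ \ge\ \Bigl(\tfrac{\pi}{2(1+\delta)x}-1\Bigr)\tfrac{2n\sin x}{7}\ =\ \tfrac{\pi n}{7(1+\delta)}\cdot\tfrac{\sin x}{x}-\tfrac{2n\sin x}{7}\ \ge\ \tfrac{0.97\,\pi n}{7(1+\delta)},
\]
the last step using $\tfrac{\sin x}{x}\ge\tfrac{\sin(\arcsin(1/4))}{\arcsin(1/4)}=\tfrac{1/4}{\arcsin(1/4)}>0.989$ for $0<x\le\arcsin(1/4)$, which leaves enough room to absorb the floor and the lower-order term $\tfrac{2n\sin x}{7}$. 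Hence the gear contains at most $n\bigl(1-\tfrac{0.97\pi}{7(1+\delta)}\bigr)$ zeros, which is the first assertion. The second is proved in the same way: under $|c_nc_0|\ge1$ and $\|P_n\|_p\ge1$, the second half of Theorem~\ref{thm2} puts at least $\sqrt n\,(2B_p(P_n))^\theta=n\gamma_n/9$ zeros in every radius-$\gamma_n$ disk centred on $\T$, so repeating the packing argument with $7$ replaced by $9$ throughout gives at most $n\bigl(1-\tfrac{0.97\pi}{9(1+\delta)}\bigr)$ zeros in the gear.

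In short, once Theorem~\ref{thm2} is in hand this is a packing computation with no analytic content. I would expect the (only real) difficulty to be the Euclidean/trigonometric bookkeeping: establishing pairwise disjointness of the $T$ excised disks across the admissible range of $\delta$ (the origin of the factor $1+\delta$ and of the hypothesis $\gamma_n\le\tfrac12$), reading the tooth width off the half-angle identity $\tfrac{\gamma_n}{2}\sqrt{4-\gamma_n^2}=\sin(2\arcsin(\gamma_n/2))$, and verifying that the numerical constant $0.97$ survives the floor and the $\tfrac{\sin x}{x}$ estimate.
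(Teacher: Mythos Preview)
Your strategy coincides with the paper's: pack disks of radius $\gamma_n$ along $\mathbb{T}$, apply Theorem~\ref{thm2} in each, and subtract from $n$. Your half-angle identification $\tfrac{\gamma_n}{2}\sqrt{4-\gamma_n^2}=\sin 2x$ with $x=\arcsin(\gamma_n/2)$ is slicker than the paper's detour through Heron's formula, and your $\tfrac{\sin x}{x}$ lower bound plays the same role as the Shafer--Fink inequality the paper invokes. However, two of your steps fail as written.

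First, the claim that $\tfrac{\sin x}{x}>0.989$ ``leaves enough room to absorb the floor and the lower-order term $\tfrac{2n\sin x}{7}$'' is false. At $\gamma_n=\tfrac12$ one has $\sin x=\tfrac14$, so the floor loss equals $\tfrac{2n\sin x}{7}=\tfrac{n}{14}\approx 0.071n$, whereas the available slack $\tfrac{\pi n}{7(1+\delta)}\bigl(\tfrac{\sin x}{x}-0.97\bigr)\le\tfrac{0.019\,\pi n}{7}\approx 0.0085n$ is an order of magnitude too small; for instance at $\delta=0$ your bound gives $TN\gtrsim 2.61\,n/7$ versus the target $0.97\pi n/7\approx 3.05\,n/7$. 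The paper sidesteps this by working with the \emph{non-integer} count $G=\tfrac{\pi}{2(1+\delta)x}$ directly (hence the word ``approximately'' in the statement), so no floor term ever appears and $\tfrac{\sin x}{x}>0.97$ alone suffices. Second, your disjointness verification does not go through: you need $\sin\!\bigl(2(1+\delta)x\bigr)\ge 2\sin x$, but at $\delta=0$ this reads $2\sin x\cos x\ge 2\sin x$, i.e.\ $\cos x\ge 1$, which fails for every $x>0$. Thus adjacent disks genuinely overlap for all small $\delta$ (the lens is centred near $e^{i\omega/2}\cos(\omega/2)$, inside the unit disk), and the additivity step ``the union carries at least $TN$ zeros'' is unjustified. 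The paper does not check disjointness either, but you rely on it explicitly.
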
 
\begin{remark}
If $P_n \in \mathcal{G}_n$ we can choose $\gamma_n = 9{\frac{(\log{n})^\theta}{\sqrt{n}}} \leq \frac{1}{2}$. If $\theta=1$, the inequality $\gamma_n \leq \frac{1}{2}$ is valid for $n\geq 35582$. Thus, the associated gear wheel contains at most $66\%$ zeros of the polynomial $P_n.$
\end{remark}

\begin{figure}
	\includegraphics[scale=0.35]{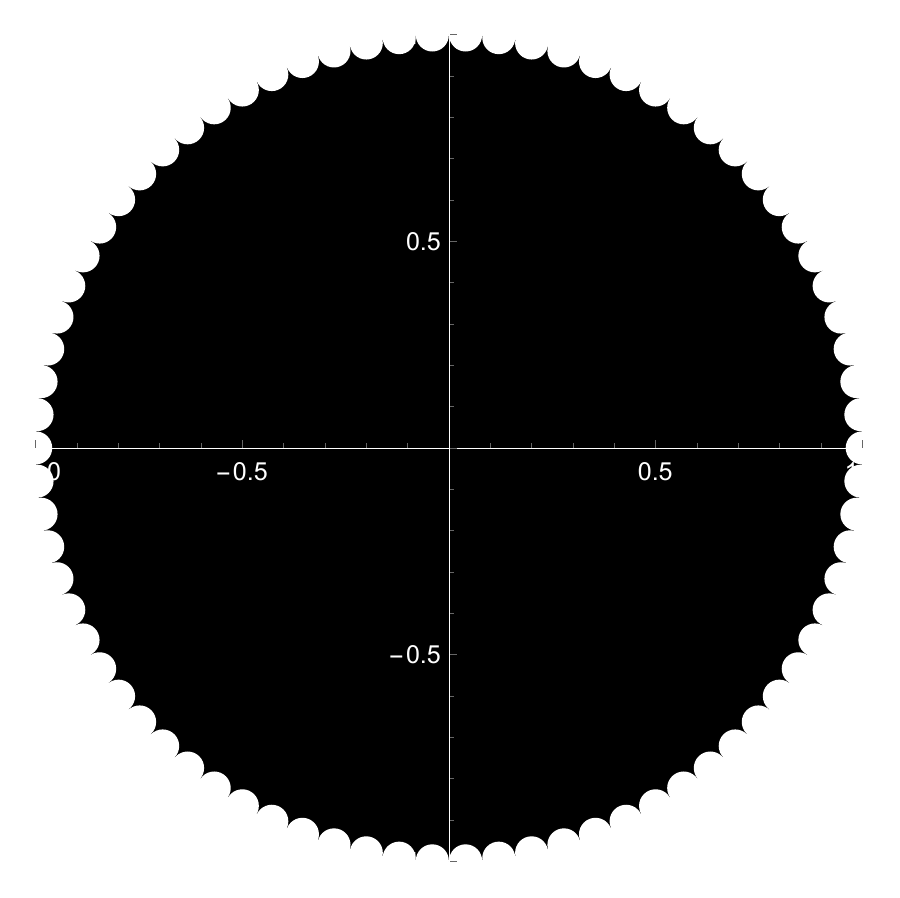}
	\includegraphics[scale=0.35]{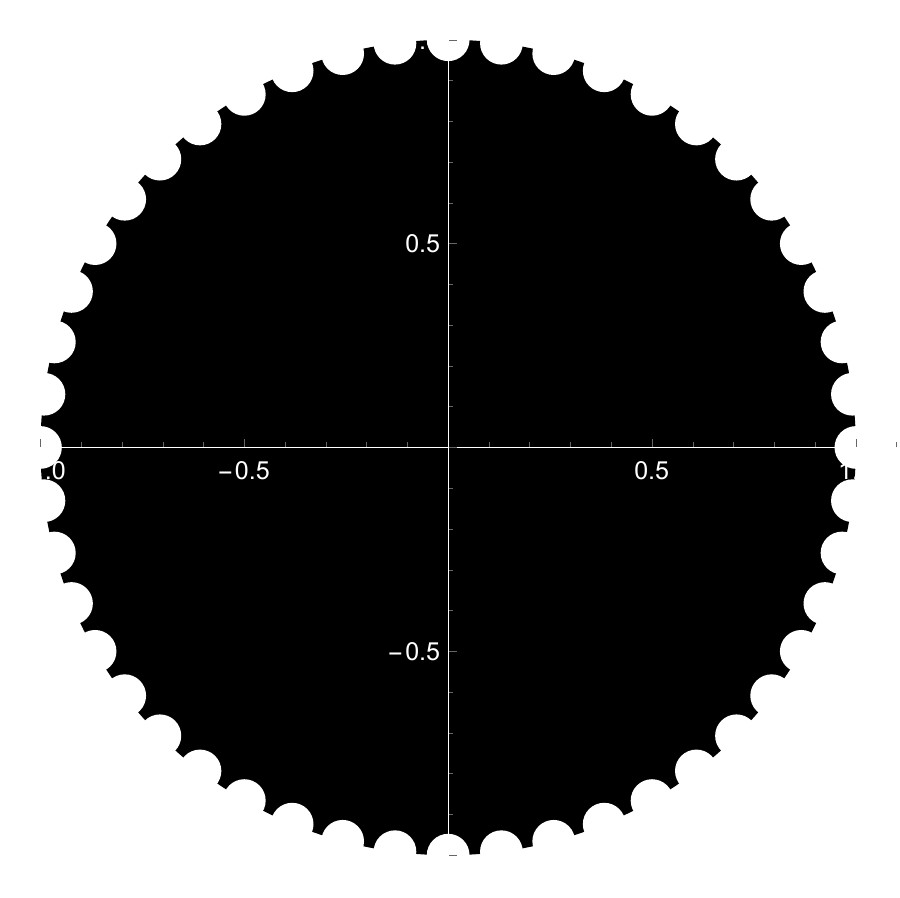}
	\caption{Left: Gear wheel of 78 teeth with width 0 ($\gamma_n\approx .04, \delta=0$); Right: Gear wheel of 48 teeth with width $\pi/120$ ($\gamma_n\approx \pi/60,  \delta \approx .2577$).}\label{Gear wheel}
\end{figure}

 \subsection{Height or mean of polynomials}\label{hight-norm section}\label{sub5}
 
 We consider here three notions of size, or {\it height}, of a polynomial $P_n(z)$ of degree $n$. The simple one is given by \[ \|P_n\|_{\infty} = \max_{|z| =1} \, |P_n(z)|. \]
  The Mahler measure (geometric mean) $M(P_n)$ of $P_n$ is defined as the following equivalent forms by using Jensen's formula,
\[
M(P_n)=\exp{\left(\frac{1}{2\pi}\int_{\mathbb{T}} \log\!\left(\big|P_n\big(e^{ i \theta}\big)\big| \right)\,\d\theta \right)}=\exp{\left(\int_{\mathbb{R}/\mathbb{Z}} \log\!\left(\big|P_n\big(e(\theta)\big)\big| \right)\,\d\theta \right)} = |c_n|\prod_{j=1}^n \max\{1, |\alpha_j|\}.
\]
The logarithmic Mahler measure, denoted by $m(P_n)$, is the natural logarithm of $M(P_n)$. Another one is defined as follows:
\[
M^+(P_n)=\exp{\left(\frac{1}{2\pi}\int_{\mathbb{T}} \log^+\!\left(\big|P_n\big(e^{ i \theta}\big)\big| \right)\,\d\theta \right)}=\exp{\left(\int_{\mathbb{R}/\mathbb{Z}} \log^+\!\left(\big|P_n\big(e(\theta)\big)\big| \right)\,\d\theta \right)},
\]
where $\log^+{x}= \max\{0,\, \log{x}\}$ and then set $m^+(P_n)= \log{M^+(P_n)}$. From the definition, one observed that 
\begin{equation}\label{1/9/15:00}
M(P_n)\leq M^{+}(P_n)\leq  \|P_n\|_{\infty}.
\end{equation}

For a monic polynomial $P_n$, the Mahler measure $M(P_n)\geq 1$ implies that the logarithmic Mahler measure $m(P_n) \geq 0$. A classical theorem of Kronecker implies that $M(P_n) = 1$ if and only if $P_n \in \mathbb{Z}[x]$  is a product of cyclotomic polynomials and the monomial $x$. An old question asks by Lehmer~\cite{Le} if there exist polynomials with Mahler measure between $1$ and $1 + \epsilon$ for arbitrary $\epsilon > 0$, and he notes that the polynomial
\[L(x) = x^{10} + x^9 − x^7 − x^6 − x^5 − x^4 − x^3 + x + 1 \]
has $M(L)=1.1762808 \ldots$. Several extensions have been done. But following \cite{BM, D, V} we have 

\begin{align}\label{lower bound of m(P)}
m(P_n)> \frac{1}{4}\Big( \frac{\log\, \log{n}}{\log{n}}\Big)^3
\end{align} 
for other general polynomials.

\subsection{ Discrepancy of zeros of polynomials}
This section will discuss a modified analysis of the angular distribution of the zeros of the polynomial $P_n(z)$ given in \eqref{P_n}.  This refinement serves as a fundamental tool for deriving the new results mentioned earlier. A standard way to define the normalized zero-counting measure for such a polynomial is
\[
\tau_n = \frac{1}{n}\sum_{k=1}^n\delta_{\alpha_k}. 
\]
A sector $S(\alpha,\, \beta)$ on $\mathbb{C}$ is defined as 
\begin{equation}\label{17/5/22:16}
S(\alpha,\, \beta) = \{z \in \mathbb{C} : 0<\alpha \leq  \arg{z} <\beta \leq 2\pi\}.
\end{equation}
Then, the discrepancy between the normalized zero counting measure in  $S(\alpha,\, \beta)$ and the Lebesgue measure of the arc on the unit circle in this sector is known as the angular discrepancy. It is denoted by $\mathcal{D}(P_n) $ and given by
\begin{align*}
\mathcal{D}(P_n) = \sup_{[\alpha,\beta]\subset \mathbb{T}}\left|\tau_n{(S(\alpha,\, \beta))} - \frac{\beta - \alpha}{2\pi}\right|.
\end{align*}
 In a pioneering work in 1950 concerning the angular discrepancy of zeros, Erd\H{o}s, and Tur\'an \cite{ET} proved that
\begin{equation}\label{0426_20:13}
\mathcal{D}(P_n) \le C \sqrt{\frac{B_\infty(P_n)}{n}}
\end{equation}
with $C = 16$. In 1954, Ganelius \cite{G} established \eqref{0426_20:13} with the constant $C = \sqrt{2\pi / k} =2.5619\ldots$, where $k=1/1^2 - 1/3^2 + 1/5^2 - \ldots =0.9159\ldots$ denotes the Catalan's constant. In 1992, Mignotte \cite{M} refined Ganelius's result by establishing the stronger inequality
\begin{equation}\label{0426_20:27}
\mathcal{D}(P_n) \le  C\sqrt{ \frac{1}{n} \,m^+\left(\frac{P_n}{\sqrt{|c_0c_n|}}\right)}.
\end{equation}
with the same constant $C = \sqrt{2\pi / k} =2.5619\ldots$. In 2019, Soundararajan \cite{S} improved this result by establishing \eqref{0426_20:27} with the constant $C = \frac{8}{\pi} = 2.5464\ldots$. Only recently, Carneiro et al.~\cite{CDFKMMTW}, where the author of this article is one of the co-authors, 
 improved the upper bound in \eqref{0426_20:27} by establishing the following result.

\begin{theorem}[Carneiro et al.~\cite{CDFKMMTW}, (2021)8]\label{C}
Let $P_n(z) = \sum_{k=0}^n c_kz^k $ be a polynomial of degree $n$ such that $P_n(0)\neq 0$. Then we have
\begin{equation*}
\mathcal{D}(P_n) \leq  \frac{4}{\sqrt{\pi}}\sqrt{ \frac{1}{n} \,m^+\left(\frac{P_n}{\sqrt{|c_0c_n|}}\right)}.
\end{equation*}
\end{theorem}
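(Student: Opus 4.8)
I would run the Fourier-analytic argument going back to Erd\H{o}s and Tur\'an, in the sharpened form of Ganelius, Mignotte and Soundararajan, with the constant extracted from an extremal problem at the end. Since $\mathcal{D}(P_n)$ and the right-hand side are invariant under $P_n\mapsto\lambda P_n$, I would first normalize so that $|c_0c_n|=1$; Mahler's inequality $M(P_n)^2\ge|c_0c_n|$ (a consequence of Jensen's formula) then gives $M(P_n)\ge1$, and it remains to prove $\mathcal{D}(P_n)\le\frac{4}{\sqrt{\pi}}\sqrt{m^+(P_n)/n}$ with $m^+(P_n)=\int_{\mathbb{R}/\mathbb{Z}}\log^+|P_n(e(\theta))|\,\d\theta$. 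Writing $P_n(z)=c_n\prod_j(z-\alpha_j)$, $\alpha_j=\rho_je(\theta_j)$, and $F(\theta)=\log|P_n(e(\theta))|$, the Fourier expansion of $\log|e(\theta)-\alpha|$ (taken separately for $|\alpha|<1$ and $|\alpha|>1$) gives, for $k\ge1$,
\[
\widehat{F}(k)=-\frac{1}{2k}\sum_{j=1}^{n}\overline{\beta_j}^{\,k},\qquad
\beta_j:=\begin{cases}\alpha_j&\text{if }\rho_j\le1,\\ 1/\overline{\alpha_j}&\text{if }\rho_j>1,\end{cases}
\]
so that $\arg\beta_j=\theta_j$, $r_j:=|\beta_j|\le1$, and $\tfrac1n\sum_j r_j^{|k|}e(-k\theta_j)=-\tfrac{2k}{n}\widehat{F}(k)$. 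Moreover $\widehat{F}(0)=\log M(P_n)=m(P_n)\ge0$, whence $\|F\|_{L^1}=2\!\int F^{+}-\!\int F\le2\,m^+(P_n)$; and $1-r\le\log(1/r)$ gives $\sum_j(1-r_j)\le\sum_j|\log\rho_j|=2\,m(P_n)\le2\,m^+(P_n)$, which will control the error from zeros off the circle.

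\textbf{Step 2 (reduction to a Fourier extremal problem).} Since $\mathcal{D}(P_n)$ is exactly the star-discrepancy of the angles $\{\theta_j\}\subset\mathbb{R}/\mathbb{Z}$, I would, for each arc $I$ and each cutoff $N$, sandwich $\one_I$ between band-limited one-sided approximants $m_I^{-}\le\one_I\le m_I^{+}$ with $\widehat{m_I^{\pm}}$ supported in $|k|\le N$ and $\widehat{m_I^{\pm}}(0)=|I|\pm\varepsilon_N$ (Beurling--Selberg--Vaaler functions; their precise profile I keep free inside an admissible class). Testing $\tfrac1n\sum_j\delta_{\theta_j}$ against $m_I^{\pm}$, expanding into Fourier series, inserting the identity of Step~1 for the weighted exponential sums, and bounding the error from $r_j^{|k|}\mapsto1$ by $\sum_j(1-r_j^{|k|})\le|k|\sum_j(1-r_j)\le2|k|\,m^+(P_n)$, I obtain, uniformly in $I$,
\[
\mathcal{D}(P_n)\ \le\ \varepsilon_N\ +\ \frac{C}{n}\,\|\Psi_{I,N}\|_{L^\infty}\,\|F\|_{L^1}\ +\ \bigl(\text{terms of the same order}\bigr),\qquad
\Psi_{I,N}:=\sum_{1\le|k|\le N}|k|\,\widehat{m_I^{\pm}}(k)\,e(-k\theta),
\]
where $\Psi_{I,N}$ is the multiplier $|D|$ applied to $m_I^{\pm}$ — a conjugate-Fej\'er-type kernel concentrated on scale $1/N$ near the endpoints of $I$, with $\|\Psi_{I,N}\|_{L^\infty}\asymp N$. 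Using $\|F\|_{L^1}\le2\,m^+(P_n)$ from Step~1, this reads $\mathcal{D}(P_n)\le\varepsilon_N+\tfrac{A_N}{n}\,m^+(P_n)$ with $A_N\asymp N$.

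\textbf{Step 3 (optimization).} Choosing $N\asymp\sqrt{n/m^+(P_n)}$ balances the two terms and already yields $\mathcal{D}(P_n)\ll\sqrt{m^+(P_n)/n}$ — the square root being produced precisely by this balance. To reach the sharp constant $\tfrac{4}{\sqrt{\pi}}$ I would minimize $\sqrt{\varepsilon_N A_N}$ jointly over the scale $N$ and over the profile of $m_I^{\pm}$ (subject to $\varepsilon_N$ being the optimal one-sided $L^1$-approximation error for an arc), i.e.\ a constrained Fourier extremal problem whose extremizer is an explicit smooth profile of Gaussian type — this is where $\Gamma(1/2)=\sqrt{\pi}$ enters, and it is what improves Soundararajan's $8/\pi$, the value delivered by the non-extremal Fej\'er kernel. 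Taking $\sup_I$ and undoing the normalization restores $m^+(P_n/\sqrt{|c_0c_n|})$ on the right, which is the assertion.

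\textbf{Main obstacle.} Steps~1--2 are bookkeeping plus classical one-sided approximation theory; the difficulty is concentrated in Step~3 — solving the constrained extremal problem that couples $\varepsilon_N$ to $A_N$ and pinning its value to $4/\sqrt{\pi}$ — together with the sharp accounting needed to confirm that the off-circle error terms and the various numerical factors accumulated in Steps~1--2 genuinely combine to this constant rather than to a larger one. A secondary point is to set up the admissible class of approximants wide enough to contain the extremizer yet narrow enough for the optimization to be carried out explicitly.
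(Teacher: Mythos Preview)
The paper does not prove this statement: Theorem~\ref{C} is quoted as an external result of Carneiro et al.~\cite{CDFKMMTW} and is used only as a black box (for instance, in the last line of the proof of Proposition~\ref{thm0}). There is therefore no ``paper's own proof'' to compare your proposal against.

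For what it is worth, your outline is a reasonable sketch of the Erd\H{o}s--Tur\'an/Ganelius/Mignotte/Soundararajan machinery, and you correctly locate the entire difficulty in Step~3: identifying and solving the extremal problem that produces the constant $4/\sqrt{\pi}$ in place of Soundararajan's $8/\pi$. However, Step~3 as written is not a proof but a description of what a proof would have to do --- you neither specify the admissible class, nor exhibit the extremizer, nor verify that the optimal value is $4/\sqrt{\pi}$. The title of~\cite{CDFKMMTW} (``Hilbert transforms and equidistribution of zeros of polynomials'') suggests that the actual argument there is framed somewhat differently from your Beurling--Selberg set-up, likely via an extremal problem for the Hilbert transform rather than directly for one-sided approximants to~$\one_I$; but since the present paper does not reproduce that argument, any detailed comparison would have to be made against~\cite{CDFKMMTW} itself.
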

 However, more recently, in a preprint, Shu and Wang~\cite{SW} obtained a sharp discrepancy upper bound obtained in terms of the weaker norm. Precisely, they proved
\begin{theorem}[Shu-Wang(2021)~\cite{SW}]\label{SW1}
Let $P_n(z) = \sum_{k=0}^n c_kz^k $ be a polynomial of degree $n$ such that $P_n(0)\neq 0$. Then
 \begin{equation}\label{SW}
\mathcal{D}(P_n) \leq \sqrt{ \frac{2}{n} B_{\infty}(P_n)}.
\end{equation}
\end{theorem}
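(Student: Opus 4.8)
The statement is a quantitative (Erd\H{o}s--Tur\'an--type) equidistribution estimate, so the natural route is the classical one, organized into three movements: convert the angular zero-count in a sector into a main term plus a remainder governed by the size of $P_n$ on and near $\T$; bound that remainder through the Fourier coefficients of $t\mapsto\log|P_n(e(t))|$; and finally pass to $B_\infty(P_n)$ via Jensen's formula. Only the last, quantitative step --- arranging the absolute constant to be exactly $\sqrt2$ --- is not routine.

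I would first record the Fourier/Jensen input. Factor $P_n(z)=c_n\prod_{j=1}^n(z-\alpha_j)$ with $\alpha_j=\rho_j\,e(\theta_j)$, and set $r_j:=\min(\rho_j,\rho_j^{-1})\le1$. The expansion $\log|e(t)-\alpha_j|=\log^+\!\rho_j-\sum_{k\ge1}\tfrac{r_j^k}{k}\cos\big(2\pi k(t-\theta_j)\big)$ gives, for $L(t):=\log|P_n(e(t))|$,
\[
\widehat L(0)=m(P_n),\qquad \widehat L(k)=-\frac1{2k}\sum_{j=1}^n r_j^{\,k}\,e(-k\theta_j)\quad(k\ge1).
\]
From $|c_0|=|c_n|\prod_j\rho_j$ one checks the identity $m(P_n)-\log\sqrt{|c_0c_n|}=\tfrac12\sum_j|\log\rho_j|\ge0$, and then \eqref{1/9/15:00} yields the two facts I need: the ``exterior mass'' obeys $\sum_j\log(1/r_j)=2\big(m(P_n)-\log\sqrt{|c_0c_n|}\big)\le2B_\infty(P_n)$; and, since $\log\|P_n\|_\infty-L\ge0$ has integral $\log\|P_n\|_\infty-m(P_n)$ while its nonzero Fourier coefficients coincide up to sign with those of $L$, one gets $|\widehat L(k)|\le\log\|P_n\|_\infty-m(P_n)\le B_\infty(P_n)$ for all $k\ge1$.

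Next, reduce $\mathcal D(P_n)$ to one-sided counts of arguments in arcs (legitimate since $P_n(0)\ne0$ makes every argument well defined), and for an arc $I$ majorize/minorize its indicator $\one_I$ by trigonometric polynomials $F_M^{\pm}$ of degree $M$, so that
\[
\#\{j:\theta_j\in I\}\le\sum_{j=1}^n F_M^{+}(\theta_j)=n\,\widehat{F_M^{+}}(0)+\sum_{1\le|k|\le M}\widehat{F_M^{+}}(k)\sum_{j=1}^n e(k\theta_j),
\]
with $F_M^{-}$ giving the matching lower bound. Writing $\sum_j e(k\theta_j)=\sum_j r_j^{|k|}e(k\theta_j)+\sum_j(1-r_j^{|k|})e(k\theta_j)$, the first sum equals $2|k|$ times a Fourier coefficient of $L$ while the second has modulus $\le\sum_j(1-r_j^{|k|})\le|k|\sum_j\log(1/r_j)$; the two facts above then bound $\big|\tfrac1n\sum_j e(k\theta_j)\big|$ by a fixed multiple of $|k|B_\infty(P_n)/n$. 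Inserting this and choosing $M\asymp\sqrt{n/B_\infty(P_n)}$ already produces $\mathcal D(P_n)\le C\sqrt{B_\infty(P_n)/n}$ for an absolute $C$.

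The real work --- and the main obstacle --- is pushing $C$ down to $\sqrt2$, which the generic Erd\H{o}s--Tur\'an inequality cannot do. This forces two refinements: choosing $F_M^{\pm}$ not as off-the-shelf Fej\'er/Selberg majorants but as the extremizers of the problem attached to this particular weighting of frequencies; and, crucially, not discarding structure when passing from $\sum_j r_j^{|k|}e(k\theta_j)$ to $\sum_j e(k\theta_j)$. The clean way to do both is to sidestep the split entirely, writing $\sum_j F_M^{\pm}(\theta_j)-n|I|$ directly as $\int_0^1\big(\log|P_n(e(t))|-m(P_n)\big)\,G_M(t)\,\dt$ (up to a harmless boundary term), where $G_M$ is the kernel obtained from $F_M^{\pm}$ by the Fourier multiplier $k\mapsto|k|$, so that interior and exterior zeros alike are charged only to $\log\|P_n\|_\infty-m(P_n)\le B_\infty(P_n)$, and the constant $\sqrt2$ is then read off from the sharp value of the relevant norm of $G_M$ under the constraint that $F_M^{\pm}$ sandwich the arc indicator. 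Verifying that this extremal problem genuinely collapses to the constant $\sqrt2$, and confirming sharpness by exhibiting near-extremal $P_n$ (products concentrating the mass of $\log|P_n|$ near one point of $\T$), is where essentially all the difficulty lies; everything preceding it is bookkeeping with the expansion above.
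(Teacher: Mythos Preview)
The paper does not contain a proof of this statement: Theorem~\ref{SW1} is quoted verbatim from the preprint of Shu and Wang~\cite{SW} and then used as a black box (via \eqref{SW}) in the proofs of Theorem~\ref{thm2} and Proposition~\ref{lem2}. There is therefore no ``paper's own proof'' to compare your proposal against.

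That said, your outline is the right shape for an Erd\H{o}s--Tur\'an argument and tracks the history (Ganelius, Mignotte, Soundararajan, Carneiro et al.) accurately. Two remarks. First, you explicitly concede that the entire content of the theorem --- the constant $\sqrt{2}$ --- sits in the extremal problem you describe but do not solve; as written, your proposal is a strategy, not a proof, since the passage from ``choose the extremal $F_M^{\pm}$ so that the associated kernel $G_M$ has the right norm'' to ``that norm equals $\sqrt{2}$'' is asserted rather than carried out. Second, the actual argument in \cite{SW} does not proceed via Beurling--Selberg majorants of the arc indicator at all: it recasts the problem through a conformal/variational setup and identifies the sharp constant by solving a concrete extremal problem for a one-parameter family, so even if you completed your sketch it would be a genuinely different route from the source you are trying to reproduce.
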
  
   Note that in Theorem \ref{C} and Theorem \ref{SW1}, the computation of the asymptotic formulas for the associated norms is more complicated than the $p$-norms, $0<p<\infty$, and thus we need to use a trivial upper bound only. In the following proposition, we refine the Theorem \ref{C} in terms of $p$-norm.
\begin{proposition}\label{thm0}
 Let $p\in (0, \infty)$  and $P_n$ be a polynomial of degree $n$ such that $P_n(0)\neq 0$ and $\|P_n\|_p \geq 1$. Then
\begin{equation}\label{B_p}
\mathcal{D}(P_n) \leq \frac{4}{\sqrt{\pi}}\sqrt{\frac{B_p(P_n)}{n}}.
\end{equation}
\end{proposition}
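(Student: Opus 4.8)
The plan is to reduce the statement to Theorem~\ref{C} by comparing the height $m^+\bigl(P_n/\sqrt{|c_0c_n|}\bigr)$ appearing there with the logarithmic $p$-norm $B_p(P_n)$ defined in~\eqref{norm B_p}. Concretely, Theorem~\ref{C} gives $\mathcal{D}(P_n)\le \tfrac{4}{\sqrt\pi}\sqrt{\tfrac1n\, m^+(P_n/\sqrt{|c_0c_n|})}$, so it suffices to establish the pointwise bound
\begin{equation*}
m^+\!\left(\frac{P_n}{\sqrt{|c_0c_n|}}\right)\le B_p(P_n)=(1-|E|)\log\!\left(\frac{\|P_n\|_p}{\sqrt{|c_0c_n|}}\right)+\frac{1}{ep},
\end{equation*}
under the hypotheses $P_n(0)\ne 0$ and $\|P_n\|_p\ge1$. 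Everything then follows by monotonicity of $\sqrt{\cdot}$.

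First I would write $Q_n:=P_n/\sqrt{|c_0c_n|}$ and unfold the definition $m^+(Q_n)=\int_{\R/\Z}\log^+|Q_n(e(\theta))|\,\d\theta$. Split the torus into the ``small'' set $E=\{\theta:|P_n(e(\theta))|<1\}$ (note this is the set where $P_n$, not $Q_n$, is small — I would double-check whether the normalization matters here and absorb the discrepancy into the constant $1/(ep)$ if needed, since $|c_0c_n|$ could be $\ge1$ or not; under the stated hypothesis $\|P_n\|_p\ge1$ this is the regime that is handled) and its complement. On $E$ one has $\log^+|Q_n|$ controlled by $\log^+(1/\sqrt{|c_0c_n|})$ or simply by a bounded quantity, while on $E^c$ one applies Jensen's inequality for the concave function $\log^+$ together with the $L^p$--$\log^+$ comparison. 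The key analytic input is the elementary inequality $\log^+ x \le \tfrac{1}{ep}\,x^p$ for all $x>0$ (which is sharp at $x=(ep)^{-1/p}$, explaining the term $\tfrac{1}{ep}$), or more precisely its integrated/averaged form: for a nonnegative function $g$ on a probability space,
\begin{equation*}
\int \log^+ g \le (1-|E|)\log\!\left(\left(\frac{1}{1-|E|}\int_{E^c} g^p\right)^{1/p}\right)+\frac{1}{ep},
\end{equation*}
obtained by combining Jensen on $E^c$ with the pointwise bound on $E$. Applying this with $g=|Q_n(e(\theta))|$ and bounding $\int_{E^c}|Q_n|^p \le \int_{\R/\Z}|Q_n|^p = \|P_n\|_p^p/|c_0c_n|^{p/2}$ yields exactly $B_p(P_n)$ after dropping the harmless factor $(1-|E|)^{-1}\ge1$ inside the logarithm (which only helps, since $\|P_n\|_p\ge1$ keeps the log nonnegative).

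The main obstacle I anticipate is bookkeeping around the set $E$ and the normalization by $\sqrt{|c_0c_n|}$: one must be careful that $E$ is defined via $|P_n|<1$ rather than $|Q_n|<1$, so the splitting of $m^+(Q_n)$ does not align perfectly with $E$, and one needs the hypotheses $|c_nc_0|\ge1$-type control (present in the companion Theorem~\ref{thm2} but only $\|P_n\|_p\ge1$ here) to ensure the logarithm stays nonnegative and the constant $1/(ep)$ genuinely dominates the contribution of the small set. I would resolve this by treating the two cases $|c_0c_n|\ge1$ and $|c_0c_n|<1$ separately, or by noting that the sharp constant in $\log^+ x\le \tfrac{1}{ep}x^p$ has exactly the slack needed to absorb the $E$-part regardless. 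Once that is pinned down, the proof is a short two-step estimate: Theorem~\ref{C}, then the $\log^+$--$L^p$ inequality.
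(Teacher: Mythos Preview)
Your proposal is essentially the paper's argument. The paper writes $m^+(P_n)=\int_{E^c}\log|P_n(e(\theta))|\,\d\theta$, applies Jensen on $E^c$ to obtain $m^+(P_n)\le\frac{|E^c|}{p}\log\frac{1}{|E^c|}+(1-|E|)\log\|P_n\|_p$, bounds the first term by $1/(ep)$ via $t\log(1/t)\le 1/e$, and then invokes Theorem~\ref{C}; so the $1/(ep)$ arises from the Jensen normalization factor $|E^c|\log(1/|E^c|)$ rather than from the pointwise inequality $\log^+x\le x^p/(ep)$ you mention (on $E$ one has $\log^+|P_n|=0$ outright), and the paper simply records~\eqref{21/9/22:00} without separately treating the $\sqrt{|c_0c_n|}$ bookkeeping you flag.
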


Although, the leading constant in  the upper bound in Proposition~\ref{thm0} may be larger in compare to \eqref{SW}, but we have $\|P_n\|_p \leq \|P_n\|_{\infty}$.

Conditionally, we obtain the following explicit result with the distribution of $P_n$ on the unit circle.
\begin{corollary}\label{coro0}
Let $P_n \in \mathcal{K}_n$ such that  $\|P_n\|_p \geq 1$. Then 
\begin{equation*}
\left| \tau_n(S(\alpha,\, \beta)) - \frac{\beta - \alpha}{2\pi} \right| \leq \frac{2\sqrt{2}}{\sqrt{\pi}}\sqrt{\frac{(1-|E|)\log{(n+1)} + {e}^{-1}}{n}}.
\end{equation*}
Further, if $\theta$ is drawn randomly from $(0, 1)$, and $P_n(e(\theta))/\sqrt{n+1}$ converges in distribution (and in moments) to a standard complex Gaussian. Then, for sufficiently large $n$, we have
\begin{equation*}
\left| \tau_n(S(\alpha,\, \beta)) - \frac{\beta - \alpha}{2\pi} \right| < \frac{2\sqrt{2}}{\sqrt{\pi n}}\sqrt{\Big(1 -  \frac{1}{4}\erf^2{\Big(\sqrt{\frac{2}{n+1}}\Big)}\Big)\log{(n+1)} + e^{-1}},
\end{equation*}
where $\erf(x)= \frac{2}{\sqrt{\pi}}\int_{0}^xe^{-t^2}dt$.
\end{corollary}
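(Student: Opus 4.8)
The plan is to reduce everything to Proposition~\ref{thm0} with the single choice $p=2$, for which the logarithmic norm $B_2$ collapses on $\mathcal{K}_n$. The first thing I would record is that a polynomial $P_n\in\mathcal{K}_n$ has $|c_0|=|c_n|=1$, so $P_n(0)\ne 0$ and $|c_0c_n|=1$, while Parseval gives $\|P_n\|_2^2=\sum_{k=0}^n|c_k|^2=n+1$; in particular $\|P_n\|_2=\sqrt{n+1}\ge 1$, so the hypotheses of Proposition~\ref{thm0} hold. Substituting $p=2$, $|c_0c_n|=1$ and $\|P_n\|_2=\sqrt{n+1}$ into the definition \eqref{norm B_p} yields $B_2(P_n)=(1-|E|)\log\sqrt{n+1}+\tfrac1{2e}=\tfrac12\big((1-|E|)\log(n+1)+e^{-1}\big)$. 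Feeding this into \eqref{B_p} and pulling the factor $\tfrac12$ out of the square root converts $\tfrac4{\sqrt\pi}$ into $\tfrac{2\sqrt2}{\sqrt\pi}$; since $\big|\tau_n(S(\alpha,\beta))-\tfrac{\beta-\alpha}{2\pi}\big|\le\mathcal{D}(P_n)$ by the definition of the angular discrepancy, the first displayed bound of the corollary follows.

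For the second bound it is enough, by the first one, to prove that under the Gaussian hypothesis $1-|E|\le 1-\tfrac14\erf^2\!\big(\sqrt{2/(n+1)}\big)$ once $n$ is large, i.e.\ $|E|\ge\tfrac14\erf^2\!\big(\sqrt{2/(n+1)}\big)$. Put $Y_n:=P_n(e(\theta))/\sqrt{n+1}$ with $\theta$ uniform on $(0,1)$, so that $|E|=\P\big(|P_n(e(\theta))|<1\big)=\P\big(|Y_n|<1/\sqrt{n+1}\big)$. I would inscribe an axis-parallel square in the disk of radius $1/\sqrt{n+1}$: the event $\{|\Re Y_n|<\tfrac1{\sqrt{2(n+1)}}\}\cap\{|\Im Y_n|<\tfrac1{\sqrt{2(n+1)}}\}$ lies inside $\{|Y_n|<1/\sqrt{n+1}\}$, so $|E|$ is at least its probability. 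Now invoke the hypothesis that $Y_n$ tends to a standard complex Gaussian $Z=X+iY$ with $X,Y$ independent $N(0,\tfrac12)$: for large $n$ this probability is (asymptotically) $\P\big(|X|<\tfrac1{\sqrt{2(n+1)}}\big)\P\big(|Y|<\tfrac1{\sqrt{2(n+1)}}\big)=\erf^2\!\big(\tfrac1{\sqrt{2(n+1)}}\big)$, using that $X\sim N(0,\tfrac12)$ has density $\pi^{-1/2}e^{-x^2}$ and hence $\P(|X|<t)=\erf(t)$. Finally, since $\tfrac1{\sqrt{2(n+1)}}=\tfrac12\sqrt{2/(n+1)}$ and $\erf$ is concave on $[0,\infty)$ with $\erf(0)=0$, one has $\erf\!\big(\tfrac1{\sqrt{2(n+1)}}\big)\ge\tfrac12\erf\!\big(\sqrt{2/(n+1)}\big)$, so $\erf^2\!\big(\tfrac1{\sqrt{2(n+1)}}\big)\ge\tfrac14\erf^2\!\big(\sqrt{2/(n+1)}\big)$, which gives the desired lower bound on $|E|$. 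The explicit clause with $\erf\!\big(\sqrt{2/(n+1)}\big)$ and the strict inequality then come out of inserting this into the first bound.

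The steps in the first paragraph and the elementary geometry and $\erf$ manipulations in the second are routine; the one genuinely delicate point is the middle step of the second paragraph — passing from the convergence $Y_n\Rightarrow Z$ to an estimate for the \emph{shrinking} target $\{|Y_n|<1/\sqrt{n+1}\}$. Because the law of $Y_n$ is carried by a one-dimensional curve in $\C$, this is a small-ball (local) statement about how much of that curve falls within distance $\asymp n^{-1/2}$ of the origin, and it does not follow from weak convergence alone; this is exactly where the stronger hypothesis ``converges in distribution \emph{and in moments}'' and the phrase ``for sufficiently large $n$'' are used, to force the mass of $Y_n$ near $0$ to be governed by the value $\pi^{-1}$ of the limiting Gaussian density at the origin (so that in fact $|E|\sim 1/(n+1)$). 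The explicit factor $\tfrac14$ and the strict inequality in the statement absorb the bounded loss — essentially the area ratio $2/\pi$ — incurred when the disk $\{|Y_n|<1/\sqrt{n+1}\}$ is replaced by its inscribed square.
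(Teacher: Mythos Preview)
Your argument is correct and follows the paper's approach: apply Proposition~\ref{thm0} with $p=2$, $|c_0c_n|=1$, $\|P_n\|_2=\sqrt{n+1}$ for the first bound, then inscribe the axis-parallel square in $\{|Y_n|<1/\sqrt{n+1}\}$ and pass to the Gaussian limit to bound $|E|$ from below. Your concavity step $\erf(t/2)\ge\tfrac12\,\erf(t)$ and your flagging of the shrinking-target subtlety are careful refinements the paper omits (it simply asserts the value $\tfrac14\erf^2\!\big(\sqrt{2/(n+1)}\big)$ for the Gaussian integral over the square), but the strategy is identical.
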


Now, we look at a particular class of polynomials and find the effective upper bound of the discrepancy.
\begin{example}[Rudin–Shapiro polynomials]\label{exam} The Rudin–Shapiro polynomials are defined recursively, as follows: $P_0(z)=1=Q_0(z)$ and for $k=0,\, 1,\, 2,\, \ldots$, \[P_{k+1}(z)= P_k(z) + z^{2^k}Q_k(z)\quad \mbox{and}\quad  Q_{k+1}(z)= P_k(z) - z^{2^k}Q_k(z).\] We see that $P_k$ and $Q_k$ are two Littlewood polynomial of degree $n-1$, where $n:=2^k$. By the Parseval formula, $\|P_n\|_2=2^{k/2}$ .  By using this  we obtain a discrepancy upper bound in Proposition~\ref{thm0} \[\frac{2\sqrt{2}}{\sqrt{\pi}}\sqrt{\frac{(1-|E|)k\log{2}+ e^{-1}}{n-1}}.\] Recently, Rodgers~\cite{Ro} proved Saffari's conjecture and this gives that $|E|\sim (2)^{-(k+1)}$ as $k\rightarrow \infty$. Thus, for sufficiently large $k$, the discrepancy upper bound in Proposition~\ref{thm0} is 
\begin{equation}\label{26/9}
\left| \tau_n(S(\alpha,\, \beta)) - \frac{\beta - \alpha}{2\pi} \right| \leq \frac{2}{\sqrt{\pi}}\sqrt{\Big(2-\frac{1}{n-1}\Big)\frac{k\log{2}+ e^{-1}}{n-1}}.
\end{equation}
\end{example}
 
\subsection{Annular discrepancy of zeros of a polynomial} 
Our next result directly applies angular discrepancy, drawing inspiration from recent works~\cite{PS, Pr, PY}. We establish an upper bound for the discrepancy between the counting measure of zeros of the polynomial $P_n$ within an annular sector and the uniform measure on the unit circle $\mathbb{T}$. To provide context, let's define the annulus $\mathcal{A}_\rho$ and the annular sector $\mathcal{A}_\rho(\alpha, \beta)$ on the complex plane $\mathbb{C}$, where $0 < \rho < 1$:
\begin{equation}\label{17/5/22:18}
\mathcal{A}_\rho = \{z \in \mathbb{C} : \rho < |z| < 1/\rho \} \, \mbox{ and } \, \mathcal{A}_\rho(\alpha,\, \beta) = \{z \in \mathbb{C} : \rho < |z| < 1/\rho,\,  0<\alpha \leq  \arg{z} <\beta \leq 2\pi\}.
\end{equation}
Then, we obtain the following result.
\begin{theorem}\label{thm4}
Let $P_n(z) = \sum_{k=0}^n c_k z^k$ be a polynomial of degree $n$ such that $|c_n c_0| \geq 1$ and $\frac{\|P_n\|_\infty}{\sqrt{|c_0 c_n|}} \leq e^{\epsilon n}$ for sufficiently small $\epsilon > 0$. Then for sufficiently large $n$ (depending on $\epsilon$ and $\rho$), there exists a suitably small $\epsilon' > 0$ such that
\begin{equation*}
\left| \tau_n(\mathcal{A}_\rho(\alpha,\, \beta)) - \frac{\beta - \alpha}{2\pi} \right| \leq \left(\frac{4}{\sqrt{\pi}} +  \epsilon'\right)\sqrt{\frac{B_p(P_n)}{n}}.
\end{equation*}
Further, for $P_n \in \mathcal{G}_n$ and sufficiently large $n$ (depending on $\epsilon$ and $\rho$), there exists a suitably small $\epsilon''>0$ such that 
\begin{equation*}
\left| \tau_n(\mathcal{A}_\rho(\alpha,\, \beta)) - \frac{\beta - \alpha}{2\pi} \right| \leq \sqrt{\frac{\log{(n+1)}}{n}} (C+\epsilon''),
\end{equation*}
where $C$ is given by
\[C = \min\left\lbrace \sqrt{\frac{8}{\pi}( 1- |E|+ \frac{1}{e\log{(n+1)}}}, \, \sqrt{2}  \right\rbrace \leq 1.974\ldots . \]
\end{theorem}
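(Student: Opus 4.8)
The plan is to reduce the annular statement to the purely angular discrepancy bounds already recorded in Proposition~\ref{thm0} and Theorem~\ref{SW1}, by showing that for the polynomials under consideration only a negligible proportion of the zeros can lie outside the thin annulus $\mathcal{A}_\rho$. Write $N_{\mathrm{out}}$ for the number of zeros $\alpha_k$ with $|\alpha_k|\le \rho$ or $|\alpha_k|\ge 1/\rho$, and $N_{\mathrm{in}}=n-N_{\mathrm{out}}$. The key point is the classical Jensen/Landau-type estimate: if $|\alpha_k|\le\rho<1$ then $\log\frac{1}{|\alpha_k|}\ge\log\frac{1}{\rho}$, and more usefully $\max\{1,|\alpha_k|^{-1}\}$ and $\max\{1,|\alpha_k|\}$ contribute to $m^+(P_n)$ and $m^+(\widetilde P_n)$ respectively (where $\widetilde P_n(z)=z^nP_n(1/z)$ is the reciprocal polynomial). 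Concretely, from the product formula for the Mahler measure one gets $m^+\!\big(P_n/\sqrt{|c_0c_n|}\big)\ge c(\rho)\,N_{\mathrm{out}}$ for an explicit constant $c(\rho)=\tfrac12\log(1/\rho)>0$, so the hypothesis $\|P_n\|_\infty/\sqrt{|c_0c_n|}\le e^{\epsilon n}$ forces $m^+\big(P_n/\sqrt{|c_0c_n|}\big)\le \epsilon n$ and hence $N_{\mathrm{out}}\le (\epsilon/c(\rho))\,n$, which is $\le\epsilon_0 n$ with $\epsilon_0\to0$ as $\epsilon\to0$.

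Next I would compare the two counting measures. For any sector $S(\alpha,\beta)$, each zero of $P_n$ in the annular sector $\mathcal{A}_\rho(\alpha,\beta)$ is in particular a zero in $S(\alpha,\beta)$, and conversely a zero in $S(\alpha,\beta)$ not in $\mathcal{A}_\rho(\alpha,\beta)$ must be counted in $N_{\mathrm{out}}$; therefore
\[
\big|\tau_n(\mathcal{A}_\rho(\alpha,\beta)) - \tau_n(S(\alpha,\beta))\big| \le \frac{N_{\mathrm{out}}}{n} \le \frac{\epsilon}{c(\rho)}.
\]
Combining with the triangle inequality,
\[
\left|\tau_n(\mathcal{A}_\rho(\alpha,\beta)) - \frac{\beta-\alpha}{2\pi}\right| \le \mathcal{D}(P_n) + \frac{\epsilon}{c(\rho)}.
\]
Now apply Proposition~\ref{thm0} to bound $\mathcal{D}(P_n)\le\frac{4}{\sqrt\pi}\sqrt{B_p(P_n)/n}$ (valid since $\|P_n\|_p\ge1$ follows from $\|P_n\|_\infty\ge\sqrt{|c_0c_n|}\ge1$ together with the assumed normalization, or is part of the standing hypotheses on $P_n$). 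The extra term $\epsilon/c(\rho)$ is absorbed into $\epsilon'\sqrt{B_p(P_n)/n}$ provided $n$ is large enough that $\sqrt{B_p(P_n)/n}$ is bounded below — here one uses $B_p(P_n)\ge \tfrac1{ep}$ from the definition \eqref{norm B_p}, so $\sqrt{B_p(P_n)/n}\ge (epn)^{-1/2}$; choosing $\epsilon$ small relative to $\epsilon'$ and $n$ large relative to both $\epsilon$ and $\rho$ makes $\epsilon/c(\rho)\le\epsilon'\sqrt{B_p(P_n)/n}$, which gives the first inequality.

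For the second inequality, specialize to $P_n\in\mathcal{G}_n$: then $|c_0|=|c_n|=1$ so $\sqrt{|c_0c_n|}=1$ and $\|P_n\|_\infty\ge1$, so the hypothesis of the first part is automatic, and the same argument yields $N_{\mathrm{out}}\le(\epsilon/c(\rho))n$. Apply simultaneously Theorem~\ref{SW1}, giving $\mathcal{D}(P_n)\le\sqrt{2B_\infty(P_n)/n}\le\sqrt{2\log(n+1)/n}$ (using $\|P_n\|_\infty\le n+1$ for $P_n\in\mathcal G_n$), and Proposition~\ref{thm0} with the trivial bound $\|P_n\|_p\le\|P_n\|_2\le\sqrt{n+1}$, giving $\mathcal{D}(P_n)\le\frac{4}{\sqrt\pi}\sqrt{(1-|E|)\tfrac12\log(n+1)+\tfrac1{ep}}\big/\sqrt n$ after inserting \eqref{norm B_p}; taking the minimum of the two and folding the $\epsilon/c(\rho)$ error into $\epsilon''\sqrt{\log(n+1)/n}$ produces the constant $C=\min\{\sqrt{\tfrac8\pi(1-|E|+\tfrac1{e\log(n+1)})},\sqrt2\}$, and the numerical bound $C\le1.974\ldots$ is the worst case $|E|=0$ of $\sqrt{8/\pi}$. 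The main obstacle is the first step: getting a clean, quantitatively explicit lower bound $m^+(P_n/\sqrt{|c_0c_n|})\gtrsim_\rho N_{\mathrm{out}}$ that correctly handles zeros both inside the disc of radius $\rho$ and outside the disc of radius $1/\rho$ — this requires working with both $M^+(P_n)$ and $M^+$ of the reciprocal polynomial and checking that the normalization by $\sqrt{|c_0c_n|}$ does not destroy the inequality \eqref{1/9/15:00}; everything after that is bookkeeping with $\epsilon,\epsilon',\epsilon''$.
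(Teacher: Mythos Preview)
Your overall architecture matches the paper's: bound the number of zeros outside $\mathcal{A}_\rho$ via the (logarithmic) Mahler measure, then combine with the angular discrepancy bounds by the triangle inequality. The paper packages the first step as Proposition~\ref{lem2}, obtaining
\[
\tau_n(\mathbb{C}\setminus\mathcal{A}_\rho)\ \le\ \frac{2}{n(1-\rho)}\,m\!\left(\frac{P_n}{\sqrt{|c_0c_n|}}\right)\ \le\ \frac{2}{n(1-\rho)}\,B_p(P_n),
\]
which is your Jensen/reciprocal-polynomial step with constant $c(\rho)=(1-\rho)/2$ in place of $\tfrac12\log(1/\rho)$.

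The genuine gap is in your absorption step. You bound $N_{\mathrm{out}}/n$ only by the constant $\epsilon/c(\rho)$, obtained from $m^+\!\big(P_n/\sqrt{|c_0c_n|}\big)\le\epsilon n$, and then claim that ``$n$ large'' forces $\epsilon/c(\rho)\le \epsilon'\sqrt{B_p(P_n)/n}$. But $B_p(P_n)$ may stay bounded (your own lower bound gives only $B_p(P_n)\ge 1/(ep)$), so $\sqrt{B_p(P_n)/n}\to 0$ while $\epsilon/c(\rho)$ is fixed: the inequality goes the wrong way as $n\to\infty$, and no small $\epsilon'$ can exist. The same failure occurs in the $\mathcal{G}_n$ part, where you would need $\epsilon/c(\rho)\le\epsilon''\sqrt{\log(n+1)/n}$.

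The fix is exactly what the paper does: do \emph{not} pass immediately to the crude bound $\epsilon n$, but keep
\[
\frac{N_{\mathrm{out}}}{n}\ \le\ \frac{1}{c(\rho)}\cdot\frac{B_p(P_n)}{n}\ =\ \frac{1}{c(\rho)}\sqrt{\frac{B_p(P_n)}{n}}\cdot\sqrt{\frac{B_p(P_n)}{n}},
\]
using $m\!\big(P_n/\sqrt{|c_0c_n|}\big)\le B_p(P_n)$ (which follows from your inequality \eqref{21/9/22:00} together with $|c_0c_n|\ge1$). One factor of $\sqrt{B_p(P_n)/n}$ is now built into the error term, and only the \emph{other} factor has to be small; since $B_p(P_n)\le(1-|E|)B_\infty(P_n)+\tfrac{1}{ep}\le(1-|E|)\epsilon n+\tfrac{1}{ep}$, the hypothesis gives $\sqrt{B_p(P_n)/n}\le\sqrt{(1-|E|)\epsilon+\tfrac{1}{epn}}$, and one simply sets $\epsilon'=\tfrac{1}{c(\rho)}\sqrt{(1-|E|)\epsilon+\tfrac{1}{epn}}$. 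The $\mathcal{G}_n$ case is handled the same way with $B_2(P_n)\le\tfrac12\log(n+1)+\tfrac{1}{2e}$. Everything else in your sketch is correct bookkeeping.
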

Note that if $|E|<1-\pi/4$ then $C = \sqrt{2}$. The inequality $\frac{\|P_n\|_\infty}{\sqrt{|c_0c_n|}} \leq e^{\epsilon n}$ with suitably small $\epsilon$ is essentially the condition for angular equidistribution zeros of $P_n$ (see Section 1 in \cite{S}).
\section{Proof of results}
In order to prove the main results in Section \ref{sec2}, i.e., Theorem \ref{thm2}, \ref{thm3}, and \ref{thm gear}, we need to establish Proposition~\ref{thm0}, Theorem \ref{thm4} and Proposition \ref{lem2} at the beginning.

 \begin{proof}[Proof of Proposition~\ref{thm0}]
 Let us set $E^{c} = [0,1] \setminus E$. Then, by the hypothesis $\|P_n\|_p \geq 1$ implies that $|E^{c}| \neq \emptyset$.
 Thus, we write 
 \[ m^{+}(P_n) = \int_{E^{c}}\log{|P_n(e(\theta ))|} d\theta = \frac{|E^{c}|}{p}\left(\frac{1}{|E^{c}|}\int_{E^{c}}\log{|P_n(e(\theta ))|^p} d\theta \right). \]
 By using the concavity of the logarithmic function along with the Jensen's inequality on the integral above, we obtain 
 \[ m^{+}(P_n) \leq \frac{|E^{c}|}{p}\log\left(\frac{1}{|E^{c}|}\int_{E^{c}}{|P_n(e(\theta ))|^p} d\theta \right)= \frac{|E^{c}|}{p}\log{\frac{1}{|E^{c}|}} + (1-|E|) \log{\|P_n\|_p} . \]
 But $\max_{|E^{c}| \in (0, 1]}|E^{c}|\log{\frac{1}{|E^{c}|}}= \frac{1}{e}$, and hence, we have 
 \begin{equation}\label{21/9/22:00}
  m^{+}(P_n) \leq (1-|E|) \log{\|P_n\|_p} + \frac{1}{ep}.
 \end{equation}
The above argument is essentially a part of the proof \cite[Proposition~2.2]{PS}. We conclude the result by combining \eqref{21/9/22:00} and Theorem~\ref{C}.
 \end{proof}
  \begin{proof}[Proof of Corollary~\ref{coro0}]
  For $P_n \in \mathcal{K}_n$, we have $\log{|c_0c_n|}=0$ and $\|P_n\|_2 =\sqrt{n+1}$ and hence we obtain the first part of the corollary form Proposition~\ref{thm0}.
  
Since $P_n(e(\theta))/\sqrt{n+1}$ converges to a standard complex Gaussian, for sufficiently large $n$ we get
\begin{align*}
 \meas \left\{ \theta \in [0,\, 1] \,:\, \frac{1}{\sqrt{n+1}} P_n(e(\theta)) \in R \right\} = \frac{1}{\pi} \int_{R}e^{-x^2 - y^2}dx \, dy,
\end{align*}
where $R$ is any rectangle in $\mathbb{C}$. Let $R$ be a square of sides $\sqrt{\frac{2}{n+1}}$ and centered at the origin, then by simple Euclidean geometry, we have 
\[ |E| > \frac{1}{\pi}\int_{R}e^{-x^2 - y^2}dx \,dy = \frac{1}{4}\erf^2{\Big(\sqrt{\frac{2}{n+1}}\Big)}.\]
Hence, the result follows.
\end{proof}

\begin{proof}[Proof of Theorem~\ref{thm4}]

We recall the annulus $\mathcal{A}_\rho$ annular sector $\mathcal{A}_\rho(\alpha,\, \beta)$ from \eqref{17/5/22:18}. In the following proposition, we obtain an annular discrepancy upper bound of zeros of a polynomial, which is a better upper bound in comparison to \cite[Proposition 2.1]{PS}.
\begin{proposition}\label{lem2}
Let $P_n(z)= \sum_{k=0}^nc_kz^k$ be a polynomial of degree $n$ such that $|c_nc_0|\geq 1$ and for $p \in (0,\, \infty)$, recall $B_p(P_n)$ from Proposition~\ref{thm0}. For any $\rho \in (0,\, 1)$ and $0\leq \alpha < \beta < 2\pi $, we have 
\begin{equation}\label{20/5/19:09}
 \tau_n(\mathbb{C}\setminus \mathcal{A}_\rho) \leq \frac{2}{n(1 - \rho)} m\left(\frac{P_n}{\sqrt{|c_0c_n|}}\right) \leq \frac{2}{n(1 - \rho)}B_p(P_n)
\end{equation}
and 
\begin{equation}\label{20/5/19:10}
\left| \tau_n(\mathcal{A}_\rho(\alpha,\, \beta)) - \frac{\beta - \alpha}{2\pi} \right| \leq  \sqrt{\frac{2}{n}\min\left\lbrace \frac{8}{\pi} B_p(P_n), \, B_\infty(P_n) \right\rbrace} + \frac{2}{n(1 - \rho)}B_p(P_n) 
\end{equation}
\end{proposition}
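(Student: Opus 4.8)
\textbf{Proof proposal for Proposition~\ref{lem2}.}

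The plan is to split the statement into its two inequalities and treat them in order, since the second will build on the first. For \eqref{20/5/19:09}, the key identity is Jensen's formula applied to $P_n/\sqrt{|c_0c_n|}$: the logarithmic Mahler measure $m\bigl(P_n/\sqrt{|c_0c_n|}\bigr) = \sum_{j=1}^n \log^+|\alpha_j| - \tfrac12\log|c_0c_n| + \tfrac12\log|c_0c_n| \cdots$; more precisely, writing $P_n(z)=c_n\prod(z-\alpha_j)$ one has $m(P_n) = \log|c_n| + \sum_j \log^+|\alpha_j|$, and since $|c_0| = |c_n|\prod|\alpha_j|$ one also gets the symmetric expression $m(P_n/\sqrt{|c_0c_n|}) = \tfrac12\sum_j\bigl|\log|\alpha_j|\bigr|$. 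The point is that every zero $\alpha_j$ with $|\alpha_j|\notin[\rho,1/\rho]$ contributes at least $\log(1/\rho)$ (if $|\alpha_j|<\rho$) or $\log(1/\rho)$ (if $|\alpha_j|>1/\rho$, noting $\log|\alpha_j|>\log(1/\rho)$) to this sum of absolute values of logarithms. Hence the number of such zeros is at most $2\,m(P_n/\sqrt{|c_0c_n|})/\log(1/\rho)$, and then one uses the elementary bound $\log(1/\rho)\geq 1-\rho$ for $\rho\in(0,1)$ to replace $\log(1/\rho)$ by $1-\rho$ in the denominator. Dividing by $n$ gives the first inequality in \eqref{20/5/19:09}; the second inequality $m(P_n/\sqrt{|c_0c_n|})\leq B_p(P_n)$ follows from $m\leq m^+$ (which is \eqref{1/9/15:00} in logarithmic form) combined with \eqref{21/9/22:00} from the proof of Proposition~\ref{thm0}, together with the $p=\infty$ case that is immediate from the definition of $B_\infty$.

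For \eqref{20/5/19:10}, the strategy is to compare the annular-sector count with the sector count and then invoke the angular discrepancy bounds. Write $\tau_n(\mathcal{A}_\rho(\alpha,\beta))$ as $\tau_n(S(\alpha,\beta))$ minus the mass of zeros that lie in the sector $S(\alpha,\beta)$ but outside the annulus $\mathcal{A}_\rho$; the latter is at most $\tau_n(\mathbb{C}\setminus\mathcal{A}_\rho)$. Therefore
\[
\left|\tau_n(\mathcal{A}_\rho(\alpha,\beta)) - \frac{\beta-\alpha}{2\pi}\right| \leq \left|\tau_n(S(\alpha,\beta)) - \frac{\beta-\alpha}{2\pi}\right| + \tau_n(\mathbb{C}\setminus\mathcal{A}_\rho) \leq \mathcal{D}(P_n) + \tau_n(\mathbb{C}\setminus\mathcal{A}_\rho).
\]
Now bound $\mathcal{D}(P_n)$ from above by the minimum of the two available estimates: Theorem~\ref{C} combined with \eqref{21/9/22:00} gives $\mathcal{D}(P_n)\leq \tfrac{4}{\sqrt\pi}\sqrt{B_p(P_n)/n} = \sqrt{\tfrac{2}{n}\cdot\tfrac{8}{\pi}B_p(P_n)}$, while Theorem~\ref{SW1} gives $\mathcal{D}(P_n)\leq\sqrt{\tfrac{2}{n}B_\infty(P_n)}$; taking the smaller of the two produces exactly the first term $\sqrt{\tfrac{2}{n}\min\{\tfrac{8}{\pi}B_p(P_n),B_\infty(P_n)\}}$. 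For the second term, apply the first inequality \eqref{20/5/19:09} just proved. Adding the two contributions yields \eqref{20/5/19:10}.

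The main obstacle, such as it is, is the bookkeeping in the first inequality: one must be careful that the sum $\sum_j\bigl|\log|\alpha_j|\bigr|$ genuinely equals $2\,m(P_n/\sqrt{|c_0c_n|})$ and that \emph{both} the small zeros ($|\alpha_j|<\rho$) and the large zeros ($|\alpha_j|>1/\rho$) are counted correctly with the right sign, so that a zero outside the annulus contributes at least $\log(1/\rho)$ in absolute value regardless of which side it is on. One should also double-check that the inequality $m(P_n/\sqrt{|c_0c_n|})\leq B_p(P_n)$ holds for \emph{all} $p\in(0,\infty]$ under the stated hypotheses $|c_nc_0|\geq 1$ (needed so that $\log|c_0c_n|\geq 0$ and the normalization does not create spurious positive mass) — but this is precisely the content of \eqref{21/9/22:00} together with \eqref{1/9/15:00}. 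Everything else is a routine application of results already established in the excerpt.
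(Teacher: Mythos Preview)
Your argument is correct and, for \eqref{20/5/19:10} and for the second inequality in \eqref{20/5/19:09}, it follows exactly the paper's route: decompose $\tau_n(\mathcal{A}_\rho(\alpha,\beta))=\tau_n(S(\alpha,\beta))-\tau_n(S(\alpha,\beta)\setminus\mathcal{A}_\rho)$, apply the triangle inequality, and then combine Proposition~\ref{thm0} and Theorem~\ref{SW1} with the annular bound; likewise the chain $m(P_n/\sqrt{|c_0c_n|})\le m^+(P_n)-\tfrac12\log|c_0c_n|\le B_p(P_n)$ via \eqref{21/9/22:00} and the hypothesis $|c_0c_n|\ge 1$ is precisely what the paper does (the paper writes it as $\int_{E^c}\log|P_n|\,d\theta-\tfrac{1-|E|}{2}\log|c_0c_n|$, which is the same computation).

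The one genuine difference is in the \emph{first} inequality of \eqref{20/5/19:09}. The paper does not argue directly: it splits $\tau_n(\mathbb{C}\setminus\mathcal{A}_\rho)=\tau_n(\mathbb{C}\setminus D_{1/\rho})+\tau_n(D_\rho)$ and then invokes \cite[Lemma~5.1]{PS} as a black box. You instead derive the identity $m(P_n/\sqrt{|c_0c_n|})=\tfrac12\sum_j\bigl|\log|\alpha_j|\bigr|$ from Jensen and Vi\`ete, observe that each zero outside $\mathcal{A}_\rho$ contributes at least $\log(1/\rho)\ge 1-\rho$, and conclude. Your route is self-contained and makes the mechanism transparent (and in fact is essentially what the cited lemma in \cite{PS} is doing under the hood); the paper's route is shorter on the page but pushes the work into an external reference. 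Either way the content is the same.
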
 

 Assume the mild condition $\frac{\|P_n\|_\infty}{\sqrt{|c_0c_n|}} \leq e^{\epsilon n}$ for suitably small $\epsilon>0$. Then we must find a suitably small $\epsilon'>0$ such that $\sqrt{B_p(P_n)/n} \leq \frac{1-\rho}{2} \epsilon'$. Specifically, we can choose $\epsilon'=\frac{2}{1-\rho}\sqrt{(1-|E|)\epsilon +\frac{1}{epn}}$, and this would be sufficiently small whenever $n\geq n_{\epsilon, \rho}$. Thus, for $p\in (0,\, \infty)$, \eqref{20/5/19:10} of Proposition~\ref{lem2} gives
\begin{align*}
\left| \tau_n(\mathcal{A}_\rho(\alpha,\, \beta)) - \frac{\beta - \alpha}{2\pi} \right| & \leq \sqrt{\frac{2}{n}\min\left\lbrace \frac{8}{\pi} B_p(P_n), \, B_\infty(P_n) \right\rbrace} + \epsilon' \sqrt{\frac{B_p(P_n)}{n}} \leq \left(\frac{4}{\sqrt{\pi}}+\epsilon'\right)\sqrt{\frac{B_p(P_n)}{n}}.
\end{align*}
For $p=2$ we have $\|P_n\|_2 \leq \sqrt{n+1}$ for $P_n \in \mathcal{G}_n$. This gives $B_2(P_n)= \frac{1}{2}((1-|E|)\log{(n+1)} + e^{-1})$. Also, we get the upper bound $B_{\infty}(P_n)\leq \log{(n+1)}$. Thus, \eqref{20/5/19:10} of Proposition~\ref{lem2} gives
 \begin{align*}
\left| \tau_n(\mathcal{A}_\rho(\alpha,\, \beta)) - \frac{\beta - \alpha}{2\pi} \right|& \leq \sqrt{\frac{\log{(n+1)}}{n}}\\
& \times \left( \min\left\lbrace \sqrt{\frac{8}{\pi}( 1- |E|+ (e\log{(n+1)})^{-1})}, \, \sqrt{2}    \right\rbrace  + \bigg(1- |E|+ \frac{1}{e\log{(n+1)}}\bigg)\epsilon'\right).
\end{align*}
Now, we choose $\epsilon''= \bigg(1- |E|+ \frac{1}{e\log{(n+1)}}\bigg)\epsilon'$ in above to complete the proof.
\end{proof}

\begin{proof}[Proof of Proposition \ref{lem2}]
Let $D_\rho$ be a disk which is defined by $D_\rho = \{z \in \mathbb{C}: |z|< \rho \}$ where $0< \rho < 1$. Then we have 
\begin{align*}
\tau_n(\mathbb{C}\setminus  \mathcal{A}_\rho) = \tau_n(\mathbb{C}\setminus D_{\frac{1}{\rho}}) + \tau_n( D_\rho).
\end{align*}
By appealing \cite[Lemma 5.1]{PS} in above equality we obtain the first inequality of \eqref{20/5/19:09}, i.e.,
\begin{equation}\label{19/5/15:48}
\tau_n(\mathbb{C}\setminus  \mathcal{A}_\rho) \leq \frac{2}{n(1-\rho)} m\left(\frac{P_n}{\sqrt{|c_0c_n|}}\right).
\end{equation}

To prove the second inequality of \eqref{20/5/19:09}, at first, we recall $E$ from Proposition~\ref{thm0} and we have \[m(P_n/\sqrt{|c_0c_n|})\geq 0.\] Then we obtain 
\begin{align*}
m\bigg(P_n/\sqrt{|c_0c_n|}\bigg) = \int_{E^c}\log{|P_n(e(\theta))|}d\theta +\int_{E}\log{|P_n(e(\theta))|}d\theta - \frac{1}{2}\log|c_0c_n|.
\end{align*}
But from the definition of $E$ we have $\int_{E}\log{|P_n(e(\theta))|}d\theta \leq 0$. Combining this with \eqref{21/9/22:00} and  using the condition $|c_nc_0|\geq 1$, we get
\begin{equation}\label{15/10}
m(P_n/\sqrt{|c_0c_n|}) \leq \int_{E^c}\log{|P_n(e(\theta))|}d\theta  - \frac{1 -|E|}{2}\log|c_0c_n| \leq B_p(P_n),
\end{equation}
for any $p\in (0,\, \infty)$. This completes the proof of \eqref{20/5/19:09}.

To prove the inequality \eqref{20/5/19:10}, we are following the same strategy as in the proof of Proposition 2.1 in \cite{PS}. That is, we write
\[\tau_n( \mathcal{A}_\rho(\alpha , \beta)) = \tau_n(S(\alpha , \beta)) - \tau_n(S(\alpha,\, \beta)\setminus  \mathcal{A}_\rho(\alpha , \beta)).\]
By using triangle inequality and the inequality $\tau_n(S(\alpha,\, \beta)\setminus  \mathcal{A}_\rho(\alpha , \beta)) \leq \tau_n(\mathbb{C}\setminus  \mathcal{A}_\rho)$, and then combining \eqref{19/5/15:48} with Proposition~\ref{thm0} and \eqref{SW}, we obtain the result.
\end{proof}

\begin{proof}[Proof of Theorem~\ref{thm2}]
 For the polynomial $P_n$, following \eqref{SW} we get
 \begin{align}\label{24/6/13:10}
 \Big| n\tau_n(S(\alpha, \, \beta)) - \frac{\beta - \alpha}{2\pi}n \Big| \leq \sqrt{2nB_{\infty}(P_n)},
 \end{align}
 where $B_{\infty}(P_n)= \log{\left(\frac{\|P_n\|_{\infty}}{\sqrt{|c_0c_n|}}\right)}$.
For $0 <\alpha < 1$, Proposition~\ref{lem2} gives 
\begin{align}\label{31/7/12:54}
n\tau_n(\mathbb{C}\setminus \mathcal{A}_{1-\alpha}) \leq \frac{2}{\alpha} m\left(\frac{P_n}{\sqrt{|c_0c_n|}}\right)\leq \frac{2}{\alpha} B_\infty(P_n).
\end{align}
Now, set $\delta_n= \frac{\pi a}{\sqrt{n}}\left(2B_\infty(P_n)\right)^\theta$, where $\theta \in \big[\frac{1}{2},\, 1 \big]$, $a>1$ is a positive real number and  $z_0 = e^{i\varphi}$ for some $\varphi \in [0, \, 2\pi )$. From \eqref{24/6/13:10} we have 
\begin{align*}
n\tau_n(\mathbb{C} \setminus S(\varphi-\delta_n,\, \varphi + \delta_n)) \leq n - \frac{\delta_n n}{\pi} + \sqrt{2nB_\infty(P_n)} \leq  n - (a -1 )\sqrt{n}\left(2B_\infty(P_n)\right)^{\theta}.
\end{align*}  
The sector $S(\varphi-\delta_n,\, \varphi + \delta_n)$ contains atleast $(a-1)\sqrt{n}(2B_\infty(P_n))^{\theta}$ zeros of $P_n$. Thus, the number of zeros in $\mathcal{A}_{1-\alpha} \cap S(\varphi-\delta_n,\, \varphi + \delta_n) $ is atleast
\begin{align*}
(a-1)\sqrt{n}(2B_\infty(P_n))^{\theta} - \frac{2}{\alpha}B_\infty(P_n).
\end{align*} 
Taking $ \alpha =\frac{b}{\sqrt{n}}(2B_\infty(P_n))^{1-\theta} $ on the above we get 
\[n\tau_n(\mathcal{A}_{1-\alpha} \cap S(\varphi-\delta_n,\, \varphi + \delta_n)) \geq  (a-b-1)\sqrt{n}(2B_\infty(P_n))^{\theta}. \]
Without lose of generality, if we take  $z_0 = 1$, we find a circle of radius \[r = \max\{\dist\big(1, (1-\alpha)e^{i\delta_n}\big),\, \dist\big(1, (1-\alpha)^{-1}e^{i\delta_n}\big) \},\] which contains the intersection $\mathcal{A}_{1-\alpha} \cap S(-\delta_n,\, \delta_n)$ and simple calculation gives 
\begin{align*}
r &= \max\Big\{ \Big(\frac{4\sin^2(\delta_n/2)}{1-\alpha} + \frac{\alpha^2}{(1-\alpha)^2}\Big)^{\frac{1}{2}}, \Big(\alpha^2 + 4(1-\alpha)\sin^2(\delta_n/2) \Big)^{\frac{1}{2}}\Big\}\\
 &=\Big(\frac{4\sin^2(\delta_n/2)}{1-\alpha} + \frac{\alpha^2}{(1-\alpha)^2}\Big)^{\frac{1}{2}}\\
 & \leq \Big(\frac{\delta_n^2}{1-\alpha} + \frac{\alpha^2}{(1-\alpha)^2}\Big)^{\frac{1}{2}}.
\end{align*}
The last inequality uses the inequality $\sin{x}\leq x$ for any non-negative $x$  with $0\leq x \leq 1/2$. 
Since $\delta_n \leq 1$ and $1-\theta \leq \theta$, the definition of $\delta_n$ gives \[\alpha \leq \frac{b}{\sqrt{n}}(2B_\infty(P_n))^{\theta} \leq \frac{b}{\pi a}\] and hence we get 
\begin{align}\label{31/8/00:39}
r \leq \frac{1}{\sqrt{n}}(2B_\infty(P_n))^\theta \pi a\sqrt{\frac{\pi a}{\pi a-b} + \frac{b^2}{(\pi a-b)^2}}. 
\end{align}
Note that if we choose $a=2.195$ and $b=0.195$ then we have $a-b-1=1$ and the inequality \eqref{31/8/00:39} reduce to
\[ r < \frac{7}{\sqrt{n}}(2B_{\infty}(P_n))^\theta= \gamma_n. \]
\newblock{Second Part.}
For $0 <\alpha < 1$ and $p\in (0, \infty)$, Proposition~\ref{lem2} gives
\begin{equation}\label{10/12}
n\tau_n(\mathbb{C}\setminus \mathcal{A}_{1-\alpha}) \leq \frac{2}{\alpha}B_p(P_n).
\end{equation}
 Along with \eqref{10/12}, the remaining proof follows from the first part with a certain change in the choice of parameters. In the first part, if we work with Proposition~\ref{thm0} in the place of \eqref{24/6/13:10} and \eqref{10/12} in the place of \eqref{31/7/12:54} and then proceeding further with the choice $\delta_n= \frac{\pi a}{\sqrt{n}}\left(2B_p(P_n)\right)^\theta$ and $ \alpha =\frac{b}{\sqrt{n}}(2B_p(P_n))^{1-\theta} $ we get a circular disk with radius $r$ contains atleast $(a-b-2\sqrt{2/\pi})\sqrt{n}(2B_p(P_n))^{\theta}$. Here $r$ is bounded by
 \begin{equation*}\label{31/8/00:40}
r \leq \frac{1}{\sqrt{n}}(2B_p(P_n))^\theta \pi a\sqrt{\frac{\pi a}{\pi a-b} + \frac{b^2}{(\pi a-b)^2}}. 
\end{equation*}
 Now, we choose $a=2.8$, $ b=1.8-2\sqrt{2/\pi}=0.20423\ldots$ such that $a-b-2\sqrt{2/\pi}=1$ and we have 
 \begin{equation*}
r < \frac{9}{\sqrt{n}}(2B_p(P_n))^\theta =\gamma_n . 
\end{equation*}
This completes the proof. 
\end{proof}

\begin{proof}[Proof of Theorem~\ref{thm3}]
We follow the proof of Theorem~\ref{thm2} with some adjustments
 to derive this result.
 Since $P_n \in \mathcal{G}_n$ we have $P_n(z) = \sum_{k=0}^na_kz^k$ with $|a_0| = |a_n| =1$ and $|a_k| \leq 1$. Thus, $\log{|a_0a_n|} =0$ and hence
\begin{equation}\label{30/8/23:50}
\log{ \| P_n \|_2 } = \frac{1}{2}\log{\left( \int_{0}^1|P_n(e(k \theta ))|^2d\theta \right)} = \frac{1}{2}\log{\left( \sum_{k=0}^n|a_k|^2  \right)} \leq   \frac{1}{2}\log{( n+1)}.
\end{equation}
For any polynomial $P_n \in \mathcal{G}_n$, by \eqref{30/8/23:50} we obtain a trivial bound \[B_2(P_n) \leq \frac{1}{2}\log{(n+1)} + \frac{1}{2e}.\] The proof follows from the proof of the second part in Theorem~\ref{thm2} with a few modifications. We replace the upper bounds of $B_2(P_n)$ in \eqref{10/12} and Proposition~\ref{thm0} by $\frac{1}{2}\log{(n+1)} + \frac{1}{2e}$ and will use for our purpose. Furthermore, we set the parameters $\delta_n$ and $\alpha$ as follows:
\[\delta_n= \frac{2.8 \pi}{\sqrt{n}}(\log(n+1) + 1/e)^\theta \mbox{  and }  \alpha =\frac{0.2}{\sqrt{n}}(\log{(n+1)} + 1/e)^{1-\theta} .\]
 With such changes, following the proof of Theorem~\ref{thm2} and using the inequality  $\log{(n+1)} > \log{n}$, we obtain the required result.
\end{proof}
\begin{proof}[Proof of Theorem \ref{thm gear}]
At first, we want to compute the length of an arc of the unit circle that lies inside the circle of radius $\gamma_n$, whose center is at some point of the unit circle. Let $O_1$ be such point and $O$ be the center of the unit circle. Also, denote the intersecting points of two circles by $A$ and $B$. Let $\theta$ denote the angle $\angle {AOB}$ and $\Gamma$ be the arc length of $AB$ of the unit circle. Since $O_1$ is the middle point of the arc $AB$ on the unit circle, we have $\angle {O_1OA}= {\theta}/{2}$. Thus, the arc length of $O_1A=\theta/2$. Now, the area of the triangle, $\triangle O_1OA = \frac{1}{2}\sin{(\theta/2)}$, since the side $O_1A=\gamma_n$. On the other hand, by the Heron's formula, we get
\begin{align*}
\triangle O_1OA = [s(s-1)^2(s-\gamma_n)]^{\frac{1}{2}}
\end{align*}
where $s$ is the semi-perimeter of the triangle, i.e., $s= 1+\gamma_n/2$. As a result, we get
\begin{align*}
\triangle O_1OA = \frac{\gamma_n}{2}\left(1-\frac{\gamma_n^2}{4}\right)^{\frac{1}{2}} =\frac{1}{2} \sin{(\theta/2)}.
\end{align*}
Since the $\Gamma = 1\cdot\theta = \theta$, we get 
\begin{align*}
\Gamma = 2\arcsin \left(\frac{\gamma_n}{2}\sqrt{4-\gamma_n^2}\right).
\end{align*}
Now, in order to make the unit disk a gear wheel, we have to draw circles consecutively of radius $\gamma_n$ with the center at the boundary of the unit disk such that the gap between any two consecutive small circles is $\delta \Gamma$, for some $\delta\in [0,1]$. Then, we have to remove the region of the disk that is inside of each of the small circles.
Thus, the total number of the teeth of the gear wheel equals the total number of gaps between consecutive circles drawn with the center at the boundary of the unit disk. Let $G$ be the such number.
Then 
\begin{align*}
G= \frac{2\pi}{\Gamma(1+\delta)}=\frac{\pi}{1+\delta} \left(\arcsin \left(\frac{\gamma_n}{2}\sqrt{4-\gamma_n^2}\right)\right)^{-1}.
\end{align*}
By Theorem \ref{thm2}, the number of zeros of $P_n(z)$ inside the gear wheel is at most 
\begin{align}\label{atmost}
n-G \sqrt{n}(2B(P_n))^{\theta} =n-\frac{\pi}{1+\delta}\sqrt{n}(2B(P_n))^{\theta}\left(\arcsin \left(\frac{\gamma_n}{2}\sqrt{4-\gamma_n^2}\right)\right)^{-1},
\end{align}
where $\gamma_n= \frac{9}{\sqrt{n}}(2B(P_n))^{\theta} \leq \frac{1}{2} $ and width of each tooth is
\begin{align*}
2\delta\arcsin \left(\frac{\gamma_n}{2}\sqrt{4-\gamma_n^2}\right).
\end{align*}
A sharp version of Shafer-Fink's inequality to bounding $\arcsin$-function by Male\v{s}evi\'c in \cite[Theorem 1.3]{BJM} gives
\begin{align*}
\arcsin{x}\leq \frac{\pi x}{2+(\pi-2)\sqrt{1-x^2}}, \quad 0\leq x\leq 1.
\end{align*}
Using this inequality in \eqref{atmost}, we get the number of zeros of $P_n(z)$ inside the gear wheel at most 
\begin{align*}
& n-\frac{\pi}{1+\delta}\frac{\sqrt{n}(2B(P_n))^{\theta}}{\gamma_n} \left(1-\frac{\pi -2}{2\pi}\gamma_n^2\right)\left(\sqrt{1-\frac{\gamma_n^2}{4}}\right)^{-\frac{1}{2}}\\
& = n - \frac{\pi n}{7(1+\delta)} \left(1-\frac{\pi -2}{2\pi}\gamma_n^2\right)\left(\sqrt{1-\frac{\gamma_n^2}{4}}\right)^{-\frac{1}{2}},
\end{align*}
where the last passage uses the definition of $\gamma_n$. Since $\gamma_n\leq \frac{1}{2}$ we get that 
\begin{align*}
\left(1-\frac{\pi -2}{2\pi}\gamma_n^2\right)\left(\sqrt{1-\frac{\gamma_n^2}{4}}\right)^{-\frac{1}{2}} >0.97.
\end{align*}
Thus, the result follows.
\end{proof}
\section{Conclusion}
In our results, we establish bounds for quantities in terms of $B_p(P_n)$, as defined in equation \eqref{norm B_p}, for all values of $p$ within the open interval $(0, \infty)$. These quantities contain the measure of the set $E$, wherein the absolute value of the polynomial on the unit circle is smaller than one. Corollary \ref{coro0} and Example \ref{exam} shed light on the size of the set $|E|$, which is remarkably small, particularly for polynomial degrees that are sufficiently large. Consequently, a pressing question arises: Can we identify a class of polynomials for which the size of $|E|$ remains constant?

Furthermore,  our results are on the deterministic polynomials; however, it is worth noting that these results can be readily extended to random polynomials by using ideas from \cite{PS, PY, Pr}.
 \section{Acknowledgement}
 The author would like to thank the referees for reviewing the manuscript in detail and giving valuable comments.
The author also would like to express gratitude to Prof. R. Balasubramanian for providing valuable suggestions.
The author is supported by DST, Government of India under the DST-INSPIRE Faculty Scheme with Faculty Reg. No. IFA21-MA 168.

\end{document}